\newtheorem{theorem}{Theorem}[section]
\newtheorem{lemma}[theorem]{Lemma}
\newtheorem{remark}[theorem]{Remark}
\newtheorem{remarks}[theorem]{Remarks}
\newtheorem{example}[theorem]{Example}
\newtheorem{proposition}[theorem]{Proposition}
\newtheorem{definition}[theorem]{Definition}
\newtheorem{corollary}[theorem]{Corollary}
\newcommand{\NN}{\mathbb{N}}
\newcommand{\RR}{\mathbb{R}}
\newcommand{\EE}{\mathbb{E}}
\newcommand{\LL}{\mathbb{L}}
\newcommand{\cP}{\mathcal{P}}
\newcommand{\cL}{\mathcal{L}}
\newcommand{\supp}{\mathrm{supp}\,}
\newcommand{\ID}{\mathrm{ID}}
\newcommand{\SD}{\mathrm{L}}
\newcommand{\BO}{\mathrm{BO}}
\newcommand{\T}{\mathrm{T}}
\begin{document}
\title{Exponential functionals of L\'evy processes with jumps}
\author{Anita Behme\thanks{Technische Universit\"at M\"unchen, Zentrum Mathematik, 
Boltzmannstra\ss e 3, D-85748 Garching bei M\"unchen,
Germany; email: a.behme@tum.de, tel.: +49/89/28917424,
fax:+49/89/28917435}}
\date{\today}
\maketitle
\vspace{-0.5cm}

\begin{abstract}
We study the exponential functional $\int_0^\infty e^{-\xi_{s-}} \, d\eta_s$ of two one-dimensional independent L\'evy processes $\xi$ and $\eta$, where $\eta$ is a subordinator. In particular, we derive an integro-differential equation for the density of the exponential functional whenever it exists. Further, we consider the mapping $\Phi_\xi$ for a fixed L\'evy process $\xi$, which maps the law of $\eta_1$ to the law of the corresponding exponential functional $\int_0^\infty e^{-\xi_{s-}} \, d\eta_s$, and study the behaviour of the range of $\Phi_\xi$ for varying characteristics of $\xi$. 
Moreover, we derive conditions for selfdecomposable distributions and generalized Gamma convolutions to be in the range. On the way we also obtain new characterizations of these classes of distributions.
\end{abstract}

2010 {\sl Mathematics subject classification.} 60G10, 60G51, 60E07.\\
{\sl Key words and phrases.}  COGARCH volatility, exponential functional, generalized gamma convolution, generalized Ornstein-Uhlenbeck process, integral mapping, L\'evy process, selfdecomposability, stationarity


\section{Introduction}

Given two independent L\'evy processes  $(\xi_t)_{t\geq 0}$, $(\eta_t)_{t\geq 0}$ the corresponding {\sl exponential functional} is defined as
\begin{equation}\label{eq:expfunc}
 V:=\int_{(0,\infty)} e^{-\xi_{t-}}d\eta_t,
\end{equation}
provided that the integral converges a.s. Necessary and sufficient conditions for this convergence in terms of the L\'evy characteristics of $(\xi_t)_{t\geq 0}$ and $(\eta_t)_{t\geq 0}$ have been given in \cite{ericksonmaller05}.\\
Exponential functionals of L\'evy processes describe the stationary
distributions of generalized Ornstein-Uhlenbeck (GOU) processes. More detailed, if $\xi_t$ tends to $+\infty$ as $t\to\infty$ almost surely, then the law of $V$ defined in \eqref{eq:expfunc} is the unique stationary distribution of the GOU process
\begin{equation} \label{GOUdef}
V_t=e^{-\xi_t} \left( \int_0^t e^{\xi_{s-}}d\eta_s +V_0  \right),\quad t\geq 0,
\end{equation}
where $V_0$ is a starting random variable, independent of $(\xi,\eta)$, on
the same probability space (cf. \cite[Thm. 2.1]{lindnermaller05}). 

Due to their importance in applications and their complexity, exponential functionals have gained a lot of attention from various researchers over the last 25 years. See e.g. the survey \cite{bertoinyor} or the more recent research papers \cite{PardoPatieSavov, PardoRiveroSchaik} for results on exponential functionals of the form $V=\int_0^\infty e^{-\xi_{s-}} \, ds$. Exponential functionals where $\eta$ is a Brownian motion plus drift have been treated for example in \cite{kuznetsovetal}. The case of general L\'evy processes $\xi$ and $\eta$ has been studied e.g. in our previous papers \cite{BLexpfunc} and \cite{BLMranges}. Nevertheless, for several of the more concrete results in \cite{BLMranges}, the setting was narrowed down to the case where $\xi$ is a Brownian motion plus drift and $\eta$ a subordinator.

Still, in general the distribution of exponential functionals is unknown. E.g. Dufresne
(cf \cite[Equation (16)]{bertoinyor}) showed that $V
\overset{d}= \frac{2}{\sigma^2} G^{-1}_{2a/\sigma^2}$ where
$G_k$ is a Gamma($k,1$) random variable, whenever $\xi$ is a Brownian
motion with variance $\sigma^2$ and drift $a>0$, and $\eta$ is deterministic. Here and in the following $\overset{d}=$ denotes equality in distribution. A few more concrete distributions of specific exponential functionals have been obtained in \cite{GjessingPaulsen}. Further it has been investigated whether exponential functionals belong to certain classes of distributions. So, as shown in \cite{bertoinlindnermaller08}, $V$ is selfdecomposable whenever $\xi$ is spectrally negative, i.e. has no positive jumps. In \cite{BehmeMaejima} conditions are derived under which the exponential functional \eqref{eq:expfunc} is a generalized gamma convolution, where one of the processes is a compound Poisson process.

In this article we focus on the case of exponential functionals as in \eqref{eq:expfunc} when $\xi$ is a general L\'evy process such that $\lim_{t\to \infty}\xi_t=\infty$  and $\eta$ is a subordinator, independent of $\xi$. By \cite[Cor. 1]{BLMranges} this means that $V\geq 0$ a.s. and we have the following relationship between the characteristic triplet $(\gamma_\xi, \sigma^2_\xi, \nu_\xi)$  of $\xi$ and the Laplace exponents $\psi_\eta$ and $\psi_\mu$ of $\eta_1$ and the distribution $\mu$ of $V$, resp., 
\begin{align} \label{eq:relationwithjumps}
 \psi_\eta (u) = & (\gamma_\xi - \frac{\sigma_\xi^2}{2}) u \psi_\mu'(u) + \frac{\sigma^2_\xi}{2} u^2\left( (\psi'_\mu(u))^2 - \psi''_\mu(u) \right) \\
 & + \int_{\RR} \left( e^{\psi_\mu(u) - \psi_\mu(ue^{-y})} - 1 - u \psi'_\mu(u) y \mathds{1}_{|y|\leq 1}\right) \nu_\xi(dy), \quad u >0. \nonumber
\end{align}
Starting from this, we will consider several aspects of exponential functionals. In particular, in Section \ref{sec:density}, we derive an integro-differential equation for the density of the exponential functional (given its existence) which extends a previous result from \cite{CarmonaPetitYor} where $\eta$ was assumed to be deterministic. \\
Since selfdecomposable distributions and generalized Gamma convolutions play an important role in the remainder of the paper, we review them and their connection to exponential functionals in Section \ref{sec:prelim}, which also includes some new results on these classes of distributions. Further, Section \ref{sec:nested} is concerned with the behaviour of the class of distributions of exponential functionals for varying characteristics of $\xi$. In Sections \ref{sec:selfdec} and \ref{sec:ggc} we derive general conditions for selfdecomposable distributions to be given by an exponential functional with predetermined process $\xi$ and also apply these on generalized Gamma convolutions. Finally, Section \ref{sec:proof} contains the proof of Proposition \ref{prop:GGCfactorinBO}.

\subsection*{Notation}

We write $\mu=\cL(X)$ if $\mu$ is the distribution of the random variable $X$. The set of all probability distributions on $\RR$ ($\RR_+$) is denoted by $\cP$ ($\cP^+$).\\
For a real-valued L\'evy process $(\xi_t)_{t\geq 0}$, the {\it characteristic exponent}
is given by its L\'evy-Khintchine formula (e.g. \cite[Thm. 8.1]{sato})
\begin{align} \label{levykhintchine}
\log \phi_\xi(u) &:= \log \EE\left[e^{i u \xi_1 } \right]\\
&= i \gamma_\xi u - \frac{1}{2} \sigma_\xi^2 u^2   + \int_{\RR}
(e^{iux} -1 -i ux  \mathds{1}_{|x|\leq 1}) \nu_\xi(dx), \quad u\in \RR, \nonumber
\end{align}
where $(\gamma_\xi, \sigma_\xi^2, \nu_\xi)$ is the {\it characteristic triplet} of the L\'evy process $\xi$. We refer to \cite{sato} for  further information on L\'evy processes.\\
In the special case of a subordinator $(\eta_t)_{t\geq 0}$, i.e. of a nondecreasing L\'evy process,
we will also use its Laplace transform which we denote as $\LL_{\eta} (u) := \LL_{\eta_1} (u)=\EE [e^{-u\eta_1}]= e^{-\psi_\eta(u)}$, $u\geq 0$, where the {\sl Laplace exponent} $\psi_\eta$ is a {\sl Bernstein function} (BF), i.e.
\begin{equation} \label{eq:BF} 
 \psi_\eta(u)=a_\eta u + \int_{(0,\infty)} (1-e^{-ut})\nu_\eta(dt), \quad u>0,
\end{equation}
with $a\geq 0$ called the {\sl drift} of $\eta$ and a L\'evy measure $\nu_\eta$. 
A thorough introduction to BFs can be found in the monograph \cite{rene-book}. Remark that general BFs as defined in \cite{rene-book} may have an additional constant term, while in this article we restrict on BFs which are Laplace exponents of a probability measure, that is which are zero in zero and hence are of the form \eqref{eq:BF}.  \\
Similarly, the Laplace transform of a random variable $X$ on $\RR_+$ with $\mu=\cL(X)$ is written as $\LL_X(u)=\LL_\mu(u)=\EE [e^{-uX}]= e^{-\psi_X(u)}=e^{-\psi_\mu(u)}$.
Please notice, that this notation of Laplace exponents is different from the previous papers \cite{BLexpfunc, BLMranges} but coincides with the notation used in \cite{rene-book}.


As in \cite{BLexpfunc, BLMranges}, given a one-dimensional L\'evy process $(\xi_t)_{t\geq 0}$ drifting to $+\infty$, we will consider the mapping
\begin{align*}
\Phi_\xi^+ : D_\xi^+  & \to \cP^+ ,\\
\cL(\eta_1) & \mapsto \cL \left( \int_0^\infty e^{-\xi_{s-}} \,
d\eta_s \right),
\end{align*}
defined on
\begin{align*}
D_\xi^+ := \{ \cL (\eta_1) : \eta=(\eta_t)_{t\ge 0} \, & \mbox{ one-dimensional subordinator independent of }\xi\\
& \hskip 15mm \mbox{ such that }\int_0^\infty
e^{-\xi_{s-}} \, d\eta_s\mbox{ converges a.s.}\},
\end{align*}
and we denote the range of $\Phi_\xi^+$ by
$$R_\xi^+ := \Phi_\xi^+(D_\xi^+).$$

\section{On the density of the exponential functional} \label{sec:density}
\setcounter{equation}{0}

As already observed in previous articles, it follows directly from \cite[Thm. 1.3]{alsmeyeretal} that the exponential functional $V$ has a pure-type law, i.e. its distribution is either absolutely continuous, continuous singular or a Dirac measure, where the latter can only be obtained if both processes, $\xi$ and $\eta$, are deterministic (c.f. \cite[Prop. 6.1]{BLexpfunc}).\\
Absolute continuity of exponential functionals has been studied in detail in \cite{bertoinlindnermaller08}. For the setting of this paper, \cite[Thm 3.9]{bertoinlindnermaller08} shows in particular, that the exponential functional $V$ as in \eqref{eq:expfunc} is absolutely continuous, whenever the subordinator $\eta$ has a strictly positive drift. Further, in \cite[Cor. 2.5]{kuznetsovetal}, it is shown that the exponential functional $V$ as in \eqref{eq:expfunc} is absolutely continuous with continuous density if $\sigma_\xi>0$.\\
Nevertheless, if $\eta$ and $\xi$ both are compound Poisson processes, examples can be constructed in which $V$ is not absolutely continuous (see \cite{lindnersato09} and Remark \ref{rem:lindnersato} below). \\
The following theorem provides an integro-differential equation fulfilled by the density of $V$ whenever it exists. Notice that for the special case of a deterministic process $\eta_t=t$ this result has been obtained in \cite{CarmonaPetitYor} using a different technique. In particular, case (ii) below is a special case of the results in \cite{CarmonaPetitYor} or similarly of \cite[Thm. 2.3]{PardoRiveroSchaik} and is just kept here for completeness.

\begin{theorem}\label{thm:density}
 Assume that $\xi=(\xi_t)_{t\geq 0}$ is a L\'evy process such that  $\lim_{t\to \infty}\xi_t=\infty$ and with characteristic triplet $(\gamma_\xi, \sigma^2_\xi, \nu_\xi)$ such that $\int_{[-1,1]}|x|\nu_\xi(dx)<\infty$ and set $\gamma_0:= \gamma_\xi - \int_{[-1,1]}x\nu_\xi(dx)$. 
Let $\eta=(\eta_t)_{t\geq 0}$ be a subordinator with drift $a_\eta$ and jump measure $\nu_\eta$, independent of $\xi$ and such that at least one of the processes $\xi$ and $\eta$ is non-deterministic.
\begin{enumerate}
 \item If $\sigma_\xi=0$, $\gamma_0>0$ and $\nu_\xi((0,\infty))=0$, then a density $f(t)$, $t\geq 0$, of $\mu=\Phi_\xi(\cL(\eta_1))$ exists, which is continuous on $\RR_+\setminus \{\frac{a_\eta}{\gamma_0} \}$, and fulfills 
 \begin{eqnarray}\label{eq:densityboundedbelow}
f(t)&=&0, \quad t<\frac{a_\eta}{\gamma_0}, \\ 
(a_\eta - \gamma_0  t) f(t) 
& = & - \int_{\frac{a_\eta}{\gamma_0}}^t \left(\nu_\xi((-\infty, \log\frac{s}{t}))  + \nu_\eta((t-s, \infty)) \right) f(s) ds, \quad t\geq \frac{a_\eta}{\gamma_0}. \nonumber
\end{eqnarray}
\item If $\sigma_\xi=0$, $\gamma_0>0$, $\nu_\xi((0,\infty))>0$, $\nu_\xi((-\infty,0))=0$ and $\nu_\eta\equiv 0$, then a density $f(t)$, $t\geq 0$, of $\mu=\Phi_\xi(\cL(\eta_1))$ exists, which is continuous on $\RR_+\setminus \{\frac{a_\eta}{\gamma_0} \}$, and fulfills 
\begin{eqnarray}\label{eq:densityboundedabove}
f(t)&=&0, \quad t>\frac{a_\eta}{\gamma_0}, \\ 
(a_\eta - \gamma_0  t) f(t) 
& = & \int_t^{\frac{a_\eta}{\gamma_0}} \nu_\xi((\log\frac{s}{t}, \infty)) f(s) ds, \quad t\leq \frac{a_\eta}{\gamma_0}. \nonumber
\end{eqnarray}
\item Otherwise, assume that $\mu=\Phi_\xi(\cL(\eta_1))$ is absolutely continuous (with differentiable density $f(t)$, $t\geq 0$, such that $\lim_{t\to 0} t^2 f(t)=0$ if $\sigma_\xi>0$), then $f$ fulfills $\lambda$-a.e. (with $\lambda$ the Lebesgue measure)
\begin{eqnarray}\label{eq:density}
 \lefteqn{a_\eta f(t)- \left(\gamma_0+\frac{\sigma_\xi^2}{2} \right) t f(t) - \frac{\sigma_\xi^2}{2} t^2 f'(t) }\\
& = &\int_t^\infty \nu_\xi((\log\frac{s}{t}, \infty)) f(s) ds - \int_0^t \left(\nu_\xi((-\infty, \log\frac{s}{t})) + \nu_\eta((t-s, \infty)) \right) f(s) ds, \quad t\geq 0. \nonumber
\end{eqnarray}
\end{enumerate}
Conversely, if $f(t)$, $t\geq 0$, is a probability density which fulfills  \eqref{eq:densityboundedbelow}, \eqref{eq:densityboundedabove} or \eqref{eq:density} $\lambda$-a.e. for some L\'evy characteristics $\gamma_0, \sigma^2_\xi, \nu_\xi, a_\eta$ and $\nu_\eta$, then it is a density of the corresponding exponential functional \eqref{eq:expfunc}.
\end{theorem}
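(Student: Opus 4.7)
The starting point is the Laplace-exponent identity \eqref{eq:relationwithjumps}. The plan is to convert it into a statement about the density $f$ via inverse Laplace transforms and to invoke Lerch's uniqueness theorem. Under $\int_{[-1,1]}|x|\nu_\xi(dx)<\infty$ I will first absorb the compensator in \eqref{eq:relationwithjumps} into the drift $\gamma_0$. Then, using $\LL_\mu=e^{-\psi_\mu}$ together with $\psi_\mu'=-\LL_\mu'/\LL_\mu$, $(\psi_\mu')^2-\psi_\mu''=\LL_\mu''/\LL_\mu$, and $e^{\psi_\mu(u)-\psi_\mu(ue^{-y})}=\LL_\mu(ue^{-y})/\LL_\mu(u)$, multiplying \eqref{eq:relationwithjumps} by $\LL_\mu(u)$ produces the Laplace-domain identity
$$\psi_\eta(u)\LL_\mu(u)=-\Bigl(\gamma_0-\tfrac{\sigma_\xi^2}{2}\Bigr)u\LL_\mu'(u)+\tfrac{\sigma_\xi^2}{2}u^2\LL_\mu''(u)+\int_\RR\bigl(\LL_\mu(ue^{-y})-\LL_\mu(u)\bigr)\nu_\xi(dy),$$
which will be the backbone of the argument.

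For case (iii) I will compute the Laplace transform of each side of \eqref{eq:density} separately and check that they match this identity. The left-hand side is handled via $\LL_{tf}(u)=-\LL_\mu'(u)$, $\LL_{t^2f}(u)=\LL_\mu''(u)$ and an integration by parts $\int_0^\infty e^{-ut}t^2f'(t)\,dt=u\LL_\mu''(u)+2\LL_\mu'(u)$ (using the hypothesis $\lim_{t\to 0}t^2 f(t)=0$ when $\sigma_\xi>0$, and exponential decay to kill the boundary term at infinity), producing $a_\eta\LL_\mu(u)+(\gamma_0-\sigma_\xi^2/2)\LL_\mu'(u)-\tfrac{\sigma_\xi^2}{2}u\LL_\mu''(u)$. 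For the right-hand side, each $\nu_\xi$-tail integral is transformed by Fubini and the substitution $y=\log(s/t)$, followed by the elementary identity $\int_{se^{-z}}^s e^{-ur}\,dr=u^{-1}(e^{-use^{-z}}-e^{-us})$; the positive-$y$ and negative-$y$ pieces combine to $u^{-1}\int_\RR(\LL_\mu(ue^{-y})-\LL_\mu(u))\nu_\xi(dy)$, while the $\nu_\eta$-term is a standard convolution contributing $u^{-1}\LL_\mu(u)\int(1-e^{-ut})\nu_\eta(dt)$. After multiplying by $u$, the resulting identity is precisely the displayed one, and uniqueness of Laplace transforms yields \eqref{eq:density} $\lambda$-a.e.

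Cases (i) and (ii) will follow from case (iii) together with a pathwise support argument. In case (i), since $\xi$ has no positive jumps, no Gaussian part, and drift $\gamma_0>0$, one has $\xi_t\leq\gamma_0 t$ for every $t$, and consequently $V\geq a_\eta\int_0^\infty e^{-\gamma_0 t}\,dt=a_\eta/\gamma_0$ a.s., forcing $f\equiv 0$ on $[0,a_\eta/\gamma_0)$. Since $\nu_\xi((0,\infty))=0$, the positive-$y$ tail integrand in \eqref{eq:density} vanishes and the equation reduces directly to \eqref{eq:densityboundedbelow}; continuity of $f$ off $a_\eta/\gamma_0$ is then read off the Volterra form of the reduced equation. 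Case (ii) is dual: $\xi_t\geq\gamma_0 t$ yields $V\leq a_\eta/\gamma_0$, and with $\nu_\xi((-\infty,0))=0$, $\nu_\eta\equiv 0$ the equation collapses to \eqref{eq:densityboundedabove}. Existence of the density in both cases will be invoked from \cite{bertoinlindnermaller08}.

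For the converse, given a probability density $f$ fulfilling one of the three equations $\lambda$-a.e., I will reverse the chain above: the calculations of paragraph two show that $\LL_f$ solves the displayed Laplace-domain identity, hence $-\log\LL_f$ satisfies \eqref{eq:relationwithjumps}, and by the characterization of exponential functionals through \eqref{eq:relationwithjumps} from \cite{BLMranges} the measure $f\cdot\lambda$ coincides with $\Phi_\xi(\cL(\eta_1))$. The main technical hurdle throughout will be justifying the Fubini interchange and the substitution $y=\log(s/t)$ in the $\nu_\xi$-tail computation when $\nu_\xi$ is infinite near the origin; this should be achievable by a truncation argument on $\{|y|>\varepsilon\}$, followed by dominated convergence using the hypothesis $\int_{[-1,1]}|x|\nu_\xi(dx)<\infty$ together with the Lipschitz-type bound $|\LL_\mu(ue^{-y})-\LL_\mu(u)|\leq u|1-e^{-y}|\cdot\|t f(t)\|_{L^1}$ for small $|y|$.
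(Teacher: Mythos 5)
Your route is essentially the paper's: multiply \eqref{eq:relationwithjumps} by $\LL_\mu$, match Laplace transforms term by term against \eqref{eq:density} (the paper runs the identical computation as inverse Laplace transforms of each summand of its intermediate identity), and obtain the converse by reversing the chain and invoking the characterization of $R_\xi^+$ via \eqref{eq:relationwithjumps} from \cite{BLMranges}. Your pathwise bounds $\xi_t\le\gamma_0 t$ (resp.\ $\xi_t\ge\gamma_0 t$) to pin down the support in cases (i) and (ii) are a correct and more self-contained substitute for the paper's citation of the support results of \cite{BLMranges}, and your term-by-term transforms (including the integration by parts for $t^2f'(t)$ and the convolution identity for the $\nu_\eta$-term) agree with the paper's.

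Two concrete gaps. First, case (i) asserts the \emph{existence} of the density, and your only argument is the positive-drift criterion of \cite{bertoinlindnermaller08}; but case (i) permits $a_\eta=0$, where that criterion does not apply. The paper closes this by observing that a spectrally negative $\xi$ forces $\mu$ to be selfdecomposable, and non-degenerate laws in $\SD^+$ are absolutely continuous with a density continuous on the interior of the support (\cite[Thm.~V.2.16]{steutelvanharn}); this also delivers the continuity claim directly, whereas reading continuity off the Volterra form only produces a continuous version and presupposes that a density exists in the first place. Second, your dominating bound $|\LL_\mu(ue^{-y})-\LL_\mu(u)|\le u|1-e^{-y}|\,\|tf(t)\|_{L^1}$ silently assumes $\mu$ has a finite first moment, which exponential functionals in the scope of the theorem need not have (Dufresne's inverse-gamma law already has infinite mean for $2a/\sigma^2\le 1$). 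The fix is to use $|e^{-\alpha}-e^{-\beta}|\le|\alpha-\beta|\,e^{-\alpha\wedge\beta}$ for $\alpha,\beta\ge 0$, which together with $xe^{-x}\le e^{-1}$ gives $|\LL_\mu(ue^{-y})-\LL_\mu(u)|\le C|y|$ for $|y|\le 1$, uniformly in $u$ and without any moment assumption; with that replacement your truncation-and-dominated-convergence plan for the Fubini step goes through.
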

\begin{proof}
Starting from \eqref{eq:relationwithjumps}, multiplying on both sides with $\LL_\mu(u)=e^{-\psi_\mu(u)}$ and dividing once by $u$ we obtain for $u >0$
 \begin{align}\label{eq:densityh1}
 \frac{\psi_\eta (u)}{u}\LL_\mu(u) = & -(\gamma_0 - \frac{\sigma_\xi^2}{2}) \LL'_\mu(u) + \frac{\sigma^2_\xi}{2} u \LL''_\mu(u)  + \int_{\RR} \left( \frac{\LL_\mu(ue^{-y})}{u} - \frac{\LL_\mu(u)}{u} \right) \nu_\xi(dy). 
\end{align}
Now assume that $\mu$ has a density, such that $\LL_\mu(u)=\int_0^\infty e^{-ut} f(t) dt$. Denote the inverse Laplace transform by $\overset{\LL^{-1}}\longrightarrow $, then obviously we have $\LL_\mu(u)\overset{\LL^{-1}}\longrightarrow  f(t)$ $\lambda$-a.e. while (assuming $\lim_{t\to 0} t^2 f(t)=0$ and that $f$ is differentiable) $\lambda$-a.e. we get
\begin{align*}
  \LL'_\mu(u) & \overset{\LL^{-1}}\longrightarrow -tf(t),\\
u \LL''_\mu(u)& \overset{\LL^{-1}}\longrightarrow \frac{d}{dt}(t^2 f(t))= 2tf(t)+t^2 f'(t),\\
 \int_{\RR} \left( \frac{\LL_\mu(ue^{-y})}{u} - \frac{\LL_\mu(u)}{u} \right) \nu_\xi(dy) & \overset{\LL^{-1}}\longrightarrow  \int_t^\infty \nu_\xi((\log\frac{s}{t}, \infty)) f(s) ds - \int_0^t \nu_\xi((-\infty, \log\frac{s}{t}))  f(s) ds,
\end{align*}
where the last line follows from 
\begin{align*}
 \lefteqn{\int_{\RR} \left( \frac{\LL_\mu(ue^{-y})}{u} - \frac{\LL_\mu(u)}{u} \right) \nu_\xi(dy) }\\ 
&= \int_{\RR} \left(\int_0^\infty e^{-ut} \left( \int_0^{t e^y} f(s) ds - \int_0^t f(s) ds  \right) dt \right) \nu_\xi(dy) \\
&= \int_0^\infty e^{-ut} \left(\int_t^\infty f(s) \left(\int_{\log\frac{s}{t}}^\infty   \nu_\xi(dy)\right) ds \right)dt - \int_0^\infty e^{-ut} \left(\int_0^t f(s) \left(\int_{-\infty}^{\log\frac{s}{t}} \nu_\xi(dy)\right) ds \right)dt.
\end{align*}
Further for the left hand side of \eqref{eq:densityh1} with $\psi_\eta (u)=a_\eta u + \int_{(0,\infty)} (1-e^{-ut})\nu_\eta(dt)$ we will use that
$$\frac{\int_{(0,\infty)} (1-e^{-ut})\nu_\eta(dt)}{u}\LL_\mu(u)= \int_0^\infty e^{-us} \nu_\eta((s, \infty)) ds \, \LL_\mu(u)\overset{\LL^{-1}}\longrightarrow  \int_0^t \nu_\eta((t-s, \infty)) f(s) ds$$
which is due to the fact that convolutions become multiplications under the Laplace transform. Now, putting all terms together we easily derive \eqref{eq:density}.\\
Observe that in the setting of case (i), it follows from \cite[Lemma 1 and Thm. 1]{BLMranges} that the measure $\mu$ has support $[\frac{a_\eta}{\gamma_0}, \infty)$. Further recall that in this case $\xi$ is a spectrally negative process $\xi$  and hence $\mu$ is selfdecomposable and  has a continuous density on $(\frac{a_\eta}{\gamma_0}, \infty)$ (cf. \cite[Thm. V.2.16]{steutelvanharn}). \\ 
By \cite[Lemma 1 and Thm. 1]{BLMranges} in the setting of case (ii) $\mu$ has support $[0,\frac{a_\eta}{\gamma_0}]$ and otherwise $\mu$ has full support on $[0,\infty)$. Hence we derive the corresponding formulas from \eqref{eq:density}. Existence of a density in case (ii) follows from \cite[Thm. 3.9]{bertoinlindnermaller08}, continuity has been proven in \cite{CarmonaPetitYor}.\\
For the converse assume that $f$ is a density which fulfills \eqref{eq:density}, then reverting the above we see that its Laplace transform fulfills \eqref{eq:relationwithjumps} which yields the claim by \cite[Thm. 3]{BLMranges}.
\end{proof}

\begin{remark}\label{rem:lindnersato} \rm
 In \cite{lindnersato09} the exponential functional $V$ as in \eqref{eq:expfunc} has been studied in the case where $(\eta_t)_{t\geq 0}$ is a Poisson process with jump intensity $v>0$, and $\xi_t= (\log c) N_t$ for $c>1$ and another (independent) Poisson process $(N_t)_{t\geq 0}$ with jump intensity $u>0$. \\
From Theorem \ref{thm:density} above, we observe that in this setting, if a density of $V$ exists, then it fulfills $\lambda$-a.e.
$$v\int_{(t-1)\vee 0}^t f(s) ds = u \int_t^{ct} f(s) ds, \quad t\geq 0$$
or in terms of the cumulative distribution function $F(t)=\int_0^t f(s) ds$ and the parameter $q=\frac{v}{u+v}\in (0,1)$
\begin{equation} \label{eq:selfsimcumulant}
 F(t)= (1-q) F(ct) + q F(t-1), \quad t>0, \quad \text{ where } F(t)=0, \quad  t\leq 0.
\end{equation}
Actually, \eqref{eq:selfsimcumulant} can be shown to hold even if $\mu=\cL(V)$ is not absolutely continuous, by a similar proof as for Theorem \ref{thm:density}. 
Further, from \eqref{eq:selfsimcumulant} we deduce the self-similarity relation
$$\mu=(1-q)\, \mu\circ T_0^{-1} + q\, \mu \circ T_1^{-1}$$
for $\mu$ with weights $\{1-q,q\}$ and 
$$T_0:x\mapsto \frac{x}{c}, \quad T_1:x\mapsto x+1.$$
Remark that $T_1$ is not a contraction and hence $\mu$ is not a self-similar measure in the classical and well-studied sense of \cite{Hutchinson}. \\
Nevertheless, in \cite{lindnersato09}, the authors proved that $\mu$ shares some properties with self-similar measures. In particular, $\mu$ is continuous singular if $c$ is a Pisot-Vijayaraghavan number, but for Lebesgue a.a. $c>1$ there exists $\bar{q}<1$ such that $\mu$ is absolutely continuous for all $q\in (\bar{q},1)$. 
\end{remark}

From the theorem above, we can derive characterizations of densities of selfdecomposable distributions on $\RR_+$ as well as of generalized Gamma convolutions. This will be done in Corollaries \ref{cor:densitySD} and  \ref{cor:densityGGC} below. 
For the moment, we end this section with an example of application for Theorem \ref{thm:density}.

\begin{example}\rm \label{ex:COGARCH}
Let $L=(L_t)_{t\geq 0}$ be a L\'evy process with characteristic triplet $(\gamma_L,\sigma_L^2,\nu_L)$ and set 
\begin{equation*} \label{eq-def-S}
 S_t:=[L,L]_t^{d}=\sum_{0<s\leq t} (\Delta L_s)^2, \quad t\geq 0.
\end{equation*}
Then the COGARCH volatility process with parameters $\beta,\eta, \varphi>0$ driven by $L$ or $S$ is defined as
\begin{equation*}\label{cog}
V_t = e^{-\xi_t}\left(V_0 + \beta\int_{(0,t]} e^{\xi_{s}} \,d s \right),\quad t\ge0,
\end{equation*}
where $V_0$ is a nonnegative random variable, independent of $(L_t)_{t\geq 0}$, and 
\begin{equation*}\label{eq-defX}
\xi_t = \eta t - \sum_{0<s\leq t} \log (1+\varphi \Delta S_s),\quad t\geq 0.
\end{equation*}
As originally shown in \cite[Thm. 3.1]{KLM:2004}, the process defined in \eqref{cog} has a strictly stationary distribution if and only if
\begin{equation*}\label{cogstatcond} \int_{\RR_+} \log(1+\varphi y)\,\nu_S(d y) =\int_{\RR} \log(1+\varphi y^2)\,\nu_L(d y) < \eta\end{equation*}
and in this case, the stationary distribution is given by the distribution of the exponential functional 
$$V=\beta \int_{\RR_+} e^{-\xi_{s}} \,d s.$$ 
Since $\xi$ is spectrally negative by construction, we can apply Theorem \ref{thm:density}(i) (or \cite[Prop. 2.1]{CarmonaPetitYor}) to obtain that $V$ has a density $f(t)$, $t\geq 0$, with  $f(t)=0$ for $t< \frac{\beta}{\eta}$, while $f$ is continuous on $(\frac{\beta}{\eta}, \infty)$ fulfilling 
\begin{eqnarray}\label{eq:densityCOGARCH}
(\beta - \eta  t )f(t)  + \int_{\frac{\beta}{\eta}}^t \nu_S\left(\left(\frac{t-s}{s\varphi}, \infty\right)\right) f(s) ds=0, \quad t\geq \frac{\beta}{\eta}.
\end{eqnarray}
Now, if for example $(S_t)_{t\geq 0}$ is chosen to be a Poisson process with intensity $c>0$, we obtain from \eqref{eq:densityCOGARCH} the following difference-differential equation for the cumulative distribution function $F(t)$ of $V$
$$\frac{\eta t - \beta}{c} F'(t)=F(t)-F(\frac{\beta}{\eta}), \quad t\geq \frac{\beta}{\eta},$$
with $F(t)=0$ for $t< \frac{\beta}{\eta}$.
Similarly, for the common choice of $L$ having standard normally distributed jumps, one derives the recursive formula
$$ f(t) =\frac{2}{\beta-\eta t} \int_{\frac{\beta}{\eta}}^t \left( 1- \phi\left(\sqrt{\frac{t-s}{s\varphi}} \right)\right) f(s) ds, \quad t> \frac{\beta}{\eta},$$
where $\phi$ is the cumulative distribution function of the normal distribution.
\end{example}

\section{(Semi-)Selfdecomposability and Generalized Gamma Convolutions}\label{sec:prelim}
\setcounter{equation}{0}

We will use the following notations for the classes of infinitely divisible distributions:
\begin{align*}
\ID, \ID^+  \quad & \quad \text{infinitely divisible distributions on }\RR, \RR_+ \text{ (respectively)} \\
\ID_{\log}, \ID^+_{\log} \quad & \quad  \text{infinitely divisible distributions on }\RR, \RR_+ \text{ with finite log-moment} 
\end{align*}
Further the following classes of distributions will be introduced in the next subsections:
\begin{align*}
\SD, \SD^+ \quad & \quad  \text{selfdecomposable distributions on }\RR, \RR_+\\
\SD(c), \SD(c)^+ \quad & \quad  c\text{-decomposable/ semi-selfdecomposable distributions on }\RR, \RR_+\\
\BO \quad & \quad  \text{Goldie-Steutel-Bondesson class, Bondesson's class (on } \RR_+)\\
\T \quad & \quad  \text{Thorin's class, generalized gamma convolutions (on } \RR_+)
\end{align*}

\subsection{Selfdecomposability}

A random variable $X$ (or equivalently a probability measure $\mu$) is called {\it selfdecomposable}, if for all $c\in(0,1)$, there exists a random variable $Y_c$, independent of $X$, such that
\begin{equation}\label{eq-def-SD}
X\overset{d}= cX'+Y_c,\end{equation}
where $X'$ is an independent copy of $X$. In this case we write $\mu=\cL(X)\in \SD$.
Obviously, for distributions on the positive real line, \eqref{eq-def-SD} is equivalent to 
$$\LL_\mu(u)=\LL_\mu(cu)\LL_{\mu_c}(u), \quad u\geq 0, c\in (0,1),$$
or \begin{equation}\label{eq-SD-bernsteindifferenz}
    \psi_\mu(u)-\psi_\mu(cu)=\psi_{\mu_c}(u), \quad u\geq 0, c\in (0,1),
   \end{equation}
where $\mu_c=\cL(Y_c)$.
In particular it is known (cf. \cite[Prop. 5.17]{rene-book}), that every $\mu\in \SD^+$ has a Laplace exponent of the form
\begin{equation} \label{eq:selfdecomposablebernstein}
 \psi_\mu(u)= au + \int_0^\infty (1-e^{-u t}) \frac{k(t)}{t} dt, \quad u\geq 0,
\end{equation}
with $a\geq 0$ called the {\sl drift} of $\mu$ and $k:[0,\infty)\to [0,\infty)$ non-increasing.\\

The following proposition collects characterizations of selfdecomposable distributions in $\cP^+$ which we intend to use in this paper. Most of them are well known. We couldn't find characterization (iv) in this form in the literature, so we give a short instructive proof. Alternatively (iv) is easily seen to be equivalent to the characterization of selfdecomposability in \cite[Thm. V.2.9]{steutelvanharn}. Further characterizations of selfdecomposable distributions can also be found in \cite{maejimalevymatters, satolevymatter, steutelvanharn} and for a.s. positive random variables in the recent article \cite{maischenkscherer} as well as in Corollary \ref{cor:densitySD} below.

\begin{proposition}\label{prop:SD}
 Let $\mu\in \cP^+$ be a probability measure with Laplace exponent $\psi_\mu(u)$, $u\geq 0$. Then the following statements are equivalent.
\begin{enumerate}
\item[{\rm (i)}] $\mu\in \SD^+$.
\item[{\rm (ii)}] $\psi_{\mu_c}(u):=\psi_\mu(u)-\psi_\mu(cu)$ is a Bernstein function for all $c\in(0,1)$.
\item[{\rm (iii)}] $-\psi_{\mu_c}(u)= \psi_\mu(cu) - \psi_\mu(u)$ is a BF for all $c>1$.
\item[{\rm (iv)}] $u\cdot \psi_\mu'(u)$ is a BF.
\item[{\rm (v)}] $\mu=\cL(\int_{(0,\infty)} e^{-t}dX_t)$ for some subordinator $(X_t)_{t\geq 0}$ with $\EE[\log^+(X_1)]<\infty$.
\end{enumerate}
\end{proposition}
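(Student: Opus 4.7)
The plan is to run the implications (i)$\Leftrightarrow$(ii)$\Leftrightarrow$(iii)$\Leftrightarrow$(iv) around the Laplace exponent, and then close the loop (i)$\Leftrightarrow$(v) via the classical Wolfe/Jurek--Vervaat representation.

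First I would dispatch (i)$\Leftrightarrow$(ii) directly from the definition: on $\RR_+$, \eqref{eq-def-SD} is equivalent to the factorization $\LL_\mu(u)=\LL_\mu(cu)\LL_{\mu_c}(u)$, so the existence of a probability measure $\mu_c$ for each $c\in(0,1)$ is the same as $\psi_\mu(u)-\psi_\mu(cu)$ being the Laplace exponent of a sub-probability distribution, i.e.\ a Bernstein function vanishing at $0$. The equivalence (ii)$\Leftrightarrow$(iii) is then a change of variables $c\mapsto 1/c$ combined with the identity $\psi_\mu(u)-\psi_\mu(cu)=-(\psi_\mu(cu)-\psi_\mu(u))$, so no real content is added.

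The heart of the argument is (iii)$\Leftrightarrow$(iv). For (iv)$\Rightarrow$(iii), writing $g(u):=u\psi_\mu'(u)$ and using the fundamental theorem of calculus, then substituting $v=us$, I would obtain for every $c>1$
\begin{equation*}
\psi_\mu(cu)-\psi_\mu(u)=\int_u^{cu}\frac{g(v)}{v}\,dv=\int_1^c\frac{g(us)}{s}\,ds.
\end{equation*}
For each fixed $s\geq 1$, the map $u\mapsto g(us)$ is a Bernstein function (BFs are closed under positive scaling of the argument), and the class of BFs is closed under mixtures with positive weights, so the right-hand side is a BF in $u$, giving (iii). Conversely, to get (iv) from (iii) I would observe that
\begin{equation*}
u\psi_\mu'(u)=\lim_{c\downarrow 1}\frac{\psi_\mu(cu)-\psi_\mu(u)}{c-1},
\end{equation*}
which exhibits $u\psi_\mu'(u)$ as a pointwise limit of positive multiples of the BFs in (iii); since the class of Bernstein functions is closed under pointwise limits, (iv) follows.

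Finally, (i)$\Leftrightarrow$(v) is the classical Wolfe/Jurek--Vervaat characterization of selfdecomposability as the law of the stochastic integral $\int_0^\infty e^{-t}dX_t$ of a Lévy process with finite log-moment, specialized to subordinators on $\RR_+$ (noting that $\EE[\log^+X_1]<\infty$ is exactly the Erickson--Maller convergence condition for $\xi_t=t$ and $\eta=X$); I would simply cite this and point out that in our Bernstein-function language the BDLP has Laplace exponent $u\psi_\mu'(u)$, which in passing reconciles (v) with (iv). The only mildly delicate point is justifying the interchange of limits/integrals in the (iii)$\Leftrightarrow$(iv) step; this is harmless because $\psi_\mu$ is smooth on $(0,\infty)$ with completely monotone derivative, so both the differentiation under the limit and the Fubini-style rewrite $\int_u^{cu}v^{-1}g(v)\,dv=\int_1^c s^{-1}g(us)\,ds$ are automatic.
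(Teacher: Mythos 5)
Your route differs from the paper's in its logical architecture: the paper closes the cycle as (i)$\Leftrightarrow$(ii)$\Leftrightarrow$(iii) together with (ii)$\Rightarrow$(iv)$\Rightarrow$(v)$\Rightarrow$(i), whereas you prove (iii)$\Leftrightarrow$(iv) directly and handle (i)$\Leftrightarrow$(v) by a separate classical citation. Your implication (iv)$\Rightarrow$(iii) via the identity $\psi_\mu(cu)-\psi_\mu(u)=\int_u^{cu}v^{-1}g(v)\,dv=\int_1^c s^{-1}g(us)\,ds$ and the closedness of the Bernstein cone under scaling of the argument, positive mixtures and pointwise limits is correct, and it is arguably more self-contained than the paper's detour, which obtains the missing implication by passing through the stochastic-integral representation and an external theorem from the ranges paper \cite{BLMranges}. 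The trade-off is that the paper gets (v) essentially for free along the way (define $\psi_X(u):=u\psi_\mu'(u)$, invoke \cite[Thm.~5.1(ii)]{BLMranges}, and note that convergence of the integral forces the log-moment condition), while you must import the full Wolfe/Jurek--Vervaat theorem restricted to $\RR_+$; that is legitimate, but you should at least remark why the BDLP of a law in $\SD^+$ is a subordinator --- which is precisely the content of (iv), so your parenthetical reconciliation is doing real work there.

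There is one genuine gap, at (i)$\Rightarrow$(ii). You assert that the existence of a probability measure $\mu_c$ with $\LL_\mu(u)=\LL_\mu(cu)\LL_{\mu_c}(u)$ is ``the same as'' $\psi_\mu(u)-\psi_\mu(cu)$ being a Bernstein function vanishing at $0$. It is not: $-\log\LL_{\mu_c}$ is of the form \eqref{eq:BF} if and only if $\mu_c$ is \emph{infinitely divisible}, not merely a probability measure on $\RR_+$, and infinite divisibility of the $c$-factor is not part of the definition \eqref{eq-def-SD}. This is exactly the point the paper covers by citing \cite[Prop.~15.5]{sato}, which guarantees that the $c$-factors of a selfdecomposable law are automatically infinitely divisible. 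The fact is classical and the repair is a one-line citation, but as written your (i)$\Rightarrow$(ii) does not follow ``directly from the definition.'' The converse (ii)$\Rightarrow$(i) is fine as you state it (a Bernstein function vanishing at $0$ is the Laplace exponent of some infinitely divisible $\mu_c\in\cP^+$, which then witnesses \eqref{eq-def-SD}), as are (ii)$\Leftrightarrow$(iii) and the pointwise-limit argument for (iii)$\Rightarrow$(iv).
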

\begin{proof}
Equivalence of (i) and (ii) is well known and follows immediately from the definition of selfdecomposability and the fact that $\mu_c$ as in \eqref{eq-def-SD} is infinitely divisible (see e.g. \cite[Prop. 15.5]{sato}).  Further by \cite[Cor. 3.8(iii)]{rene-book} (ii) implies that also $\psi_{\mu_c}(c^{-1}u)=\psi_\mu(c^{-1}u)-\psi_\mu(u)$, $c\in(0,1)$ is a BF, i.e. (iii). The converse can be seen similarly.\\
We continue proving that (ii) implies (iv). Assume (ii), then for all $c\in(0,1)$
$$ \frac{\psi_\mu(u)-\psi_\mu(u-(1-c)u)}{(1-c)}$$
is a BF in $u$. 
Thus
$$u\psi_\mu'(u)=\lim_{c\to 1} \frac{\psi_\mu(u)-\psi_\mu(u-(1-c)u)}{(1-c)}$$
is a BF, too (\cite[Cor. 3.8(ii)]{rene-book}), which shows (iv).\\
Now assume (iv) and set 
\begin{equation} \label{eq-SD-bernsteinableitung}
 \psi_X(u):=u\psi'_\mu(u), \quad u\geq 0,
\end{equation}
 then $\psi_X$ is a BF with $\psi_X(0)=0$ and hence there exists a subordinator $(X_t)_{t\geq 0}$ with Laplace exponent $\psi_X$. Now by \cite[Thm. 5.1 (ii)]{BLMranges} (setting $\sigma=0$) this implies that
\begin{equation} \label{eq-SD-integral}
 \mu=\cL \left( \int_{(0,\infty)} e^{-t} dX_t \right).
\end{equation}
Since $\mu$ exists by assumption and therefore the integral has to converge, we obtain $\EE[\log^+(X_1)]<\infty$ and hence (v).\\
Finally, $ \int_{(0,\infty)} e^{-t} dX_t$ is well known and easily seen to be selfdecomposable (see e.g. \cite{bertoinlindnermaller08}) which concludes the proof.
\end{proof}

\begin{remark}\rm
 As already observed in \cite{BLMranges}, Equation \eqref{eq-SD-bernsteinableitung} implies in particular, that $\mu$ and $\cL(X)$ have the same drift and that the L\'evy density of $\mu$ and the L\'evy measure of $X$ are related by
\begin{equation} \label{eq:relationlevymeasures}
 k(t)=\nu_X((t,\infty))
\end{equation}
(see also \cite[Eq. 4.17]{BarndorffNielsen-Shephard}).
\end{remark}

\begin{definition}\label{factors} {\rm Differences of BFs as in (ii) and (iii) of the above proposition will appear frequently in the remaining sections of this article. Hence in the following, we refer to the distributions with Laplace exponent $\psi_{\mu_c}$ ($c\in(0,1)$) or $-\psi_{\mu_c}$ ($c>1$) as $c$-{\sl factor distributions} of the distribution $\mu\in \SD$. Recall that these are always in $\ID$ and that they are uniquely determined since $\mu \in \ID$.\\
In terms of random variables we refer to $Y_c$ as the $c$-{\sl factor} ($c\in(0,1)$) of $X$ if $X\overset{d}=cX'+Y_c$ and we say that $Y_c$ is the $c$-{\sl factor} ($c>1$) for $X$, if $cX\overset{d}=X'+Y_c.$}
\end{definition}


Further, from Theorem \ref{thm:density} above we obtain the following characterization of densities of distributions in $\SD_+$. The fact that densities of selfdecomposable distributions fulfill an equality like \eqref{eq:densitySD} can also be found in \cite[Thm. V.2.16]{steutelvanharn}. Here we see that actually all solutions to \eqref{eq:densitySD} correspond to distributions in $\SD^+$.

\begin{corollary}\label{cor:densitySD}
 Let $f(t)$ be a probability density with support $[a, \infty)$, $a\geq 0$, which is continuous on $(a, \infty)$. Then $f$ corresponds to a selfdecomposable distribution, if and only if $f$ fulfills
\begin{eqnarray}\label{eq:densitySD}
 (a-t) f(t) + \int_a^t  \nu ((t-s, \infty))  f(s) ds=0, \quad t\geq a,
\end{eqnarray}
for some L\'evy measure $\nu$ such that $\int_0^\infty \log^+(x) \nu(dx)<\infty$.
\end{corollary}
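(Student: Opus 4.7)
The plan is to realize every selfdecomposable $\mu \in \SD^+$ as an exponential functional with the deterministic driver $\xi_t = t$ and then specialize Theorem~\ref{thm:density}(i). With this choice $(\gamma_\xi, \sigma_\xi^2, \nu_\xi) = (1, 0, 0)$, so $\gamma_0 = 1$, $\sigma_\xi = 0$ and $\nu_\xi((0,\infty)) = 0$, placing us precisely in case (i); moreover the $\nu_\xi$-term in \eqref{eq:densityboundedbelow} vanishes, and what remains is literally \eqref{eq:densitySD}. Thus both directions of the corollary should follow essentially immediately from Theorem~\ref{thm:density}(i) and Proposition~\ref{prop:SD}(v), the latter providing the exponential-functional representation of $\SD^+$-laws.

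For the forward direction, suppose $f$ with support $[a,\infty)$ is the density of some $\mu \in \SD^+$. Proposition~\ref{prop:SD}(v) writes $\mu = \cL(\int_0^\infty e^{-t}\,dX_t)$ for a subordinator $X$ with drift $a_X$, L\'evy measure $\nu_X$ and $\EE[\log^+(X_1)] < \infty$; since $f$ is a genuine density, $X$ is non-deterministic so the hypotheses of Theorem~\ref{thm:density}(i) are met. Applying that theorem yields exactly \eqref{eq:densitySD} with $\nu = \nu_X$ and lower endpoint $a_X$; the identification $a = a_X$ is forced by the support description $\supp \mu = [a_X/\gamma_0, \infty) = [a_X, \infty)$ from \cite[Lemma~1 and Thm.~1]{BLMranges} invoked within the proof of Theorem~\ref{thm:density}(i). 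The log-moment integrability of $\nu = \nu_X$ is inherited from $\EE[\log^+(X_1)] < \infty$.

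Conversely, given $f$ and a L\'evy measure $\nu$ satisfying \eqref{eq:densitySD} with $\int_0^\infty \log^+(x)\,\nu(dx) < \infty$, let $\eta$ be the subordinator with drift $a$ and L\'evy measure $\nu$. The log-moment condition together with the Erickson--Maller criterion \cite{ericksonmaller05} guarantees that $V := \int_0^\infty e^{-t}\,d\eta_t$ converges a.s. Since \eqref{eq:densitySD} is precisely \eqref{eq:densityboundedbelow} for the chosen parameters $(\gamma_0, \sigma_\xi^2, \nu_\xi, a_\eta, \nu_\eta) = (1, 0, 0, a, \nu)$, the converse half of Theorem~\ref{thm:density} identifies $f$ as the density of $V$, and Proposition~\ref{prop:SD}(v) then places $\cL(V) \in \SD^+$. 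The argument is essentially a bookkeeping specialization; the only slightly delicate point is matching the hypothesized lower support bound $a$ with the drift of the subordinator in the forward direction, resolved via the support description cited above.
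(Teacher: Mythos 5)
Your proof is correct and takes essentially the same route as the paper's: both directions specialize Theorem~\ref{thm:density}(i) to the deterministic driver $\xi_t=t$, use the exponential-functional representation $\mu=\cL(\int_0^\infty e^{-t}\,dX_t)$ of laws in $\SD^+$ (Proposition~\ref{prop:SD}(v)), and match the subordinator's drift with the left endpoint $a$ of the support via the support results of \cite{BLMranges}. Your additional remarks (non-degeneracy forcing $X$ to be non-deterministic, and the equivalence of the log-moment condition on $\nu$ with $\EE[\log^+ X_1]<\infty$) are accurate bookkeeping that the paper leaves implicit.
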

\begin{proof}
Every distribution $\mu\in\SD^+$ which is non-degenerate is absolutely continuous and can be represented as $\mu=\Phi_\xi(\cL(L_1))$ for $\xi_t=t$ and some subordinator $L$ with $\EE[\log^+(L_1)]<\infty$. Further $\supp(\mu)=[a, \infty)$, $a\geq 0$, implies by \cite[Thm. 1(ii)]{BLMranges} that $L$ has drift $a$. Hence by Theorem \ref{thm:density}(i) the density of $\mu$ fulfills \eqref{eq:densitySD}.\\
Conversely, if $f(t)$ is a density with support $[a, \infty)$, $a\geq 0$, which is continuous on $(a, \infty)$ and which fulfills \eqref{eq:densitySD}, then by Theorem \ref{thm:density} it is the density of the exponential functional $\int_{(0,\infty)} e^{-t} dL_t$ for some subordinator $L$ with L\'evy measure $\nu$ and drift $a\geq 0$. Thus we conclude that $\mu\in\SD^+$.
\end{proof}

\subsection{Semi-selfdecomposability}

We say a random variable $X$ (or its probability measure $\mu$) is {\it c-decomposable}, $c\in(0,1)$, or semi-selfdecomposable if \eqref{eq-def-SD} holds for a $c\in(0,1)$ and a random variable $Y_c$ such that $\cL(Y_c) \in \ID$. We write $\SD(c), \SD^+(c)$ for the class of $c$-decomposable distributions on $\RR$ and $\RR_+$, respectively. 
As in the case of selfdecomposable distributions, we refer to the random variable $Y_c$ in \eqref{eq-def-SD} as the {\it c-factor} of $X$. 

By \cite[Prop. 15.5]{sato} it holds $\SD(c)\subset \ID$. 

For probability distributions on $\RR_+$ one can characterize $c$-decomposability in terms of the Laplace exponents. In particular, $\mu\in \SD^+(c)$ if and only if $\psi_{\mu_c}(u)=\psi_\mu(u)-\psi_\mu(cu)$, $u>0$, is a BF. The fact that BFs build a convex cone then implies directly $\SD^+(c) \subseteq \SD^+_{c^n}$ for all $n\in\NN$. More detailed 
\begin{equation} \label{eq:cnfactor}
 \psi_{\mu_{c^n}}(u)=\sum_{i=0}^{n-1} \psi_{\mu_c} (c^i u)
\end{equation}
is the Laplace exponent of the $c^n$-factor of $\mu\in \SD^+(c)$. 
Using this one further obtains for any $\mu\in \SD^+(c)$
$$\psi_\mu (u)= \lim_{n\to \infty} \psi_{\mu_{c^n}}(u) = \lim_{n\to\infty} \sum_{i=0}^{n-1} \psi_{\mu_c}(c^i u)$$
such that
$$\SD^+(c)=\{ \mu \in \cP^+, \mbox{ s.t. } \psi_\mu(u) =  \sum_{i=0}^{\infty} f(c^i u)  \mbox{ for some BF }f\}.
$$

\subsection{Generalized Gamma Convolutions}

The class of generalized Gamma convolutions $\T$ is a subclass of the selfdecomposable distributions in $\cP^+$. In particular, every $\mu\in \T$, has a Laplace exponent of the form
\begin{equation} \label{eq:thorinbernstein}
 \psi_\mu(u)= au + \int_{(0,\infty)} (1-e^{-u t}) \frac{k(t)}{t} d t, \quad u\geq 0,
\end{equation}
for some $a\geq 0$ and a completely monotone (CM) function $k:(0,\infty)\to [0,\infty)$. \\
The class of probability distributions whose Laplace transform is of the form \eqref{eq:thorinbernstein} for some $a\geq 0$ with $\frac{k(t)}{t}$ CM is called Goldie-Steutel-Bondesson class or simply Bondesson's class ($\BO$). Its Laplace exponents are referred to as complete Bernstein functions (CBF) and they can always be represented as 
\begin{equation} \label{eq:completebernstein}
 \psi_\mu(u) = a u +  \int_{(0,\infty)} \frac{u}{u + x} d \rho(x), \quad u\geq 0,
\end{equation}
with $a\geq 0$ and a so-called Stieltjes measure $\rho$, that is a measure $\rho$ on $(0,\infty)$ for which $\int_{(0,\infty)} (1+x)^{-1} \rho(dx)<\infty$. For further details and an overview of the existing literature we refer to \cite{maejimalevymatters} and \cite{rene-book}. \\
Recall that $\BO$ is the smallest class of distributions which contains all mixtures of exponential distributions and is closed under convolutions and  weak limits, while $\T$ is the smallest class that contains all gamma distributions and is closed under convolutions and weak limits. 
Also recall that $\T \subset \BO \subset \ID^+$ and $\T\subset \SD^+ \subset \ID^+$,  but $\SD^+\not\subset \BO$ and $\BO \not\subset \SD^+$.

Generalized Gamma convolutions and distributions in $\BO$ are connected via exponential functionals as shown in the following proposition, which has originally been proven in \cite[Thm. C(iii)]{MaejimaBarndorff}. Nevertheless, we can now give a completely different and shorter proof as we shall do.

\begin{proposition} \label{prop:bondessontoGGC}
 Let $\xi_t=t$. Then 
$$\Phi_\xi(\BO \cap \ID_{\log}) = \T$$
In particular, the distributions in $\BO \cap \ID_{\log}$ with finite Stieltjes measure are mapped surjectively on the generalized Gamma convolutions with $k(0+)<\infty$.
\end{proposition}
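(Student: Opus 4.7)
The plan is to exploit Proposition \ref{prop:SD}(v), which for $\xi_t=t$ identifies $\Phi_\xi$ as a bijection from $\ID^+_{\log}$ onto $\SD^+$ via the simple relation $\psi_\eta(u)=u\,\psi_\mu'(u)$ from \eqref{eq-SD-bernsteinableitung}. Since $\T \subset \SD^+$ and $\BO \cap \ID^+_{\log} \subset \ID^+_{\log}$, the proposition reduces to showing that under this correspondence $\cL(\eta_1)\in\BO$ if and only if $\mu\in\T$. The crucial structural input is the classical Schilling--Song--Vondracek characterization (see \cite{rene-book}): a Bernstein function $f$ is complete (i.e.\ corresponds to a distribution in $\BO$) if and only if $u\mapsto f(u)/u$ is a Stieltjes function.

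Applying this to $\psi_\eta(u)/u=\psi_\mu'(u)$, membership of $\cL(\eta_1)$ in $\BO$ is equivalent to $\psi_\mu'$ being a Stieltjes function. Writing $\mu\in\SD^+$ in the form \eqref{eq:selfdecomposablebernstein} and differentiating under the integral gives
\begin{equation*}
\psi_\mu'(u)=a+\int_0^\infty e^{-ut}\,k(t)\,dt,
\end{equation*}
so the question becomes: when is the Laplace transform $\cL(k)$ of $k$ a Stieltjes function? By Bernstein's theorem $k$ is completely monotone iff $k(t)=\int_0^\infty e^{-xt}\rho(dx)$ for some measure $\rho$, and a Fubini computation then yields $\cL(k)(u)=\int_0^\infty (u+x)^{-1}\rho(dx)$, which is precisely the Stieltjes form. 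Conversely, reading the Stieltjes representation of $\cL(k)$ backwards produces such a $\rho$ and forces $k$ to be CM. Since $\mu\in\T$ is exactly the statement that $k$ is CM, this closes the equivalence and establishes $\Phi_\xi(\BO\cap\ID^+_{\log})=\T$.

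The additional statement on finite Stieltjes measures then comes for free from the same identification: the Stieltjes measure $\rho$ in \eqref{eq:completebernstein} for $\psi_\eta$ coincides with the Bernstein representing measure of the $k$ appearing in $\psi_\mu$, and by monotone convergence $k(0+)=\rho((0,\infty))$. Hence $\rho$ is a finite measure iff $k(0+)<\infty$. The main subtlety I expect to work out carefully is the matching of the drift terms and of the log-moment/integrability conditions (i.e., verifying that $\cL(\eta_1)\in\ID^+_{\log}$ is automatic for any $\mu\in\T$ via the correspondence, and conversely that every $\eta\in\BO\cap\ID^+_{\log}$ really produces a $k$ that is CM in the above sense and not merely formally so); once this bookkeeping is done, the argument reduces to the direct comparison of the Stieltjes representation of $\psi_\eta$ and the completely monotone representation of $k$ sketched above.
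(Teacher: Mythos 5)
Your proof is correct and takes essentially the same route as the paper: both arguments reduce the claim, via $\psi_X(u)=u\psi_\mu'(u)$, to the identity $\psi_X(u)=au+u\int_0^\infty e^{-ut}k(t)\,dt$ and then use Bernstein's theorem together with Tonelli to convert the complete monotonicity of $k$ into the Stieltjes/complete Bernstein representation of $\psi_X$ (and back). Your appeal to the characterization that $f$ is a CBF iff $f(u)/u$ is a Stieltjes function is only a cosmetic repackaging of the paper's direct computation of $au+\int_{(0,\infty)}\frac{u}{u+x}\,d\rho(x)$, and the identification $k(0+)=\rho((0,\infty))$ for the final assertion is likewise the same in both.
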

\begin{proof}
 Assume $\mu\in \T\subset \SD^+$, then there exists a L\'evy process $X$ with $\cL(X_1)\in \ID^+_{\log}$ such that $\Phi_\xi(X_1)=\mu$, i.e. $X$ and $\mu$ are related via \eqref{eq-SD-bernsteinableitung} or \eqref{eq-SD-integral}.
Hence from \eqref{eq-SD-bernsteinableitung} and \eqref{eq:thorinbernstein}
\begin{align*}
 \psi_X(u)&= a u + u \int_{(0,\infty)} e^{-u t} k(t) dt 
= a u + u \int_{(0,\infty)} e^{-u t} \int_{[0,\infty)} e^{-t x} d \rho (x) dt
\end{align*}
for some unique measure $\rho$ with $\rho(\{0\})= \lim_{t\to \infty} k(t)=0$. Using Tonelli we can proceed
\begin{align*}
 \psi_X(u) &= a u +  \int_{(0,\infty)} u \int_{(0,\infty)} e^{-u t} e^{-t x} dt\, d \rho (x) = a u +  \int_{(0,\infty)} \frac{u}{u + x} d \rho(x).
\end{align*}
Hence $\psi_X(u)$ is a CBF (see e.g. \cite[Remark 6.4]{rene-book}) 
such that $\cL(X_1)\in \BO$ by \cite[Def. 9.1]{rene-book}.\\
Conversely, assume that $X$ is a L\'evy process such that $\cL(X_1)\in \BO\cap \ID_{\log}$. 
Then $\Phi_\xi(\cL(X_1))$ exists and the same computation backwards proves that $\Phi_\xi(\cL(X_1)) \in \T$. \\
The remaining assertion follows directly from an inspection of the above proof.
\end{proof}

From this, we obtain an analogue result to Corollary \ref{cor:densitySD} characterizing the densities of distributions in $\T$.

\begin{corollary}\label{cor:densityGGC}
 Let $f(t)$ be a probability density with support $[a, \infty)$, $a\geq 0$, which is continuous on $(a, \infty)$. Then $f$ is the density of a generalized gamma convolution, if and only if $f$ fulfills
\begin{eqnarray*}
 (a-t) f(t) + \int_a^t f(s) \int_{t-s}^\infty m(x) dx\, ds=0, \quad t\geq a,
\end{eqnarray*}
for some $m(x):(0,\infty)\to [0,\infty)$ which is CM and such that $\int_0^\infty \log^+(x) m(x) dx<\infty$.
\end{corollary}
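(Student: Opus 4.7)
The plan is to imitate the proof of Corollary \ref{cor:densitySD}, replacing the general subordinator there with one whose L\'evy measure has a completely monotone density. The bridge between the two sides of the ``if and only if'' is Proposition \ref{prop:bondessontoGGC}, together with the fact that a distribution in $\BO$ is exactly one whose L\'evy measure admits a completely monotone density $m(x)$, as can be read directly from the definition of $\BO$ given in \eqref{eq:thorinbernstein} and the surrounding text.

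For the ``only if'' direction I would start from $\mu\in\T$ with $\supp(\mu)=[a,\infty)$. Since $\T\subset\SD^+$, Proposition \ref{prop:SD}(v) (and the discussion in the proof of Proposition \ref{prop:bondessontoGGC}) lets me write $\mu=\Phi_\xi(\cL(L_1))$ for $\xi_t=t$ and some subordinator $L$ with $\cL(L_1)\in\BO\cap\ID_{\log}$. By \cite[Thm. 1(ii)]{BLMranges} the hypothesis $\supp(\mu)=[a,\infty)$ forces the drift of $L$ to equal $a$, and because $\cL(L_1)\in\BO$ its L\'evy measure is absolutely continuous with a completely monotone density $m$, which satisfies $\int_0^\infty \log^+(x)\, m(x)\, dx<\infty$ thanks to $\cL(L_1)\in\ID_{\log}$. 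Plugging $\gamma_0=1$, $\sigma_\xi=0$, $\nu_\xi\equiv 0$, $a_\eta=a$ and $\nu_\eta(dx)=m(x)\,dx$ into Theorem \ref{thm:density}(i) yields exactly the displayed equation, with $\nu_\eta((t-s,\infty))=\int_{t-s}^\infty m(x)\,dx$.

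For the converse, I would run the same chain backwards. Assume $f$ is a probability density with the stated support and continuity properties satisfying the equation for some completely monotone $m$ with $\int_0^\infty \log^+(x)\,m(x)\,dx<\infty$. Define a subordinator $L$ with drift $a$ and L\'evy measure $\nu_L(dx)=m(x)\,dx$; the log-moment condition ensures $\cL(L_1)\in\ID_{\log}^+$, and complete monotonicity of $m$ gives $\cL(L_1)\in\BO$. The converse part of Theorem \ref{thm:density} (case (i)) identifies $f$ with the density of $\int_{(0,\infty)} e^{-t}\,dL_t=\Phi_\xi(\cL(L_1))$ for $\xi_t=t$, and Proposition \ref{prop:bondessontoGGC} then places this law in $\T$.

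There is no real obstacle beyond being careful about bookkeeping. The only non-cosmetic point is the identification ``$\BO$ = L\'evy measure has a completely monotone density,'' which I want to invoke cleanly: it is immediate from the representation in \eqref{eq:thorinbernstein} for $\BO$ (with $k(t)/t$ completely monotone, so that $k(t)/t$ itself plays the role of $m$). Everything else is a direct specialisation of Theorem \ref{thm:density}(i) and a reading of Proposition \ref{prop:bondessontoGGC}, exactly parallel to how Corollary \ref{cor:densitySD} was obtained.
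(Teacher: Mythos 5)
Your proposal is correct and follows essentially the same route as the paper, which proves the corollary ``similarly to Corollary \ref{cor:densitySD} with the help of Proposition \ref{prop:bondessontoGGC}''; you have simply written out the details the paper leaves implicit. The key identifications you make --- drift $a$ from the support via \cite[Thm. 1(ii)]{BLMranges}, the completely monotone L\'evy density $m=k(t)/t$ characterizing $\BO$, and the specialization of Theorem \ref{thm:density}(i) to $\xi_t=t$ --- are exactly the intended ones.
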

\begin{proof}
The statement follows similarly to Corollary \ref{cor:densitySD} with the help of Proposition \ref{prop:bondessontoGGC}.
\end{proof}

As mentioned, the $c$-factors of selfdecomposable distributions play an important role for our studies. In the following proposition, which is of interest by its own, we will see, that the GGCs are exactly those distributions in $\SD^+$ whose $c$-factors are all in Bondesson's class. Its proof is postponed to the closing section of this article.

\begin{proposition} \label{prop:GGCfactorinBO}
 Let $\mu\in\T$, then $\mu_c\in\BO$ for all $c>0$, $c\neq 1$. Conversely, if $\mu\in\SD^+$ with either $\mu_c\in\BO$ for all $c\in(0,1)$, or $\mu_c\in\BO$ for all $c>1$, then $\mu\in\T$.
\end{proposition}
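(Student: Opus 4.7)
The plan is to prove the two implications separately: the forward direction via the Thorin integral representation of $\T$, and the converse via the characterization $\mu\in\T\Leftrightarrow u\psi_\mu'(u)$ is a complete Bernstein function (CBF), which follows from combining Proposition~\ref{prop:SD}(v) with Proposition~\ref{prop:bondessontoGGC} (noting $\psi_X(u)=u\psi_\mu'(u)$ as established in the proof of Proposition~\ref{prop:SD}).

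For the forward direction, I would start from the Thorin representation $\psi_\mu(u)=au+\int_{(0,\infty)}\log(1+u/x)\,\tau(dx)$, so that for $c\in(0,1)$
\[
\psi_{\mu_c}(u)\;=\;a(1-c)u+\int_{(0,\infty)}\log\tfrac{x+u}{x+cu}\,\tau(dx).
\]
The key subclaim is that $u\mapsto\log\tfrac{x+u}{x+cu}$ is CBF for each fixed $x>0$. From the standard identity $\log(1+u/x)=\int_0^\infty(1-e^{-ut})e^{-tx}/t\,dt$, together with the substitution $s=ct$ applied to the analogous formula for $\log(1+cu/x)$, I obtain
\[
\log\tfrac{x+u}{x+cu}\;=\;\int_0^\infty(1-e^{-ut})\,\frac{e^{-tx}-e^{-tx/c}}{t}\,dt,
\]
and then observe
\[
\frac{e^{-tx}-e^{-tx/c}}{t}\;=\;\int_x^{x/c}e^{-ts}\,ds,
\]
which is completely monotone in $t$ as a nonnegative integral of CM exponentials. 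Hence $\log\tfrac{x+u}{x+cu}$ is a Bernstein function with CM L\'evy density, i.e.\ a CBF. Since CBFs form a convex cone closed under nonnegative integrations, integrating against $\tau$ and adding the CBF $a(1-c)u$ makes $\psi_{\mu_c}$ a CBF, so $\mu_c\in\BO$. The case $c>1$ is symmetric, using $\frac{e^{-tx/c}-e^{-tx}}{t}=\int_{x/c}^{x}e^{-ts}\,ds$.

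For the converse, assume $\mu\in\SD^+$ with $\mu_c\in\BO$ for all $c\in(0,1)$; the goal is to show $u\psi_\mu'(u)$ is CBF. By hypothesis each $\psi_{\mu_c}$ is CBF, hence so is $\psi_{\mu_c}(u)/(1-c)$. Since $\psi_\mu$ is $C^\infty$ on $(0,\infty)$ (every BF is), one has the pointwise limit
\[
\frac{\psi_{\mu_c}(u)}{1-c}\;=\;\frac{\psi_\mu(u)-\psi_\mu(cu)}{1-c}\;\longrightarrow\;u\psi_\mu'(u)\qquad(c\to 1^-,\;u>0).
\]
I would then invoke closure of the cone of CBFs under pointwise limits, which follows from the Stieltjes representation $f(u)=bu+\int_{(0,\infty)}\tfrac{u}{u+s}\rho(ds)$ by vague compactness of the Stieltjes measures $\rho$ (cf.~\cite{rene-book}). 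Consequently $u\psi_\mu'(u)$ is CBF and $\mu\in\T$. The alternative hypothesis $\mu_c\in\BO$ for all $c>1$ is handled identically by letting $c\to 1^+$ in $\psi_{\mu_c}(u)/(c-1)$.

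The two non-trivial points I expect to spend care on are (a) the explicit identification of the CM L\'evy density of $\log\tfrac{x+u}{x+cu}$ in the forward direction, which is the genuine content but becomes transparent once the right substitution is spotted; and (b) the closure of CBFs under pointwise limits in the converse, which is classical but worth stating precisely because CBFs are characterized by more than just the Bernstein property. Neither looks like a serious obstacle, so the full proof should be short once these ingredients are in place.
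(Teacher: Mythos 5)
Your proof is correct and follows essentially the same route as the paper: the forward direction reduces to showing that the exponential building blocks of the Thorin/CM representation have completely monotone difference quotients, and the converse obtains $u\psi_\mu'(u)$ as a pointwise limit of the CBFs $\psi_{\mu_c}(u)/(1-c)$ and then concludes via Propositions \ref{prop:SD} and \ref{prop:bondessontoGGC}, exactly as in Section \ref{sec:proof}. The only (cosmetic) difference is that your identity $\frac{e^{-tx}-e^{-tx/c}}{t}=\int_x^{x/c}e^{-ts}\,ds$ replaces, in one line, the paper's Lemmas \ref{lem:cm1} and \ref{lem:cm2}, which establish the same complete monotonicity by an explicit computation of derivatives.
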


Summarizing, we can state the characterizations of the class $\T$ similarly to that of $\SD^+$ in Proposition \ref{prop:SD}.

\begin{corollary}\label{cor:GGC}
 Let $\mu\in \SD^+$ be a probability measure with Laplace exponent $\psi_\mu(u)$, $u>0$. Then the following statements are equivalent.
\begin{enumerate}
\item[{\rm (i)}] $\mu\in \T$.
\item[{\rm (ii)}] $\psi_{\mu_c}(u):=\psi_\mu(u)-\psi_\mu(cu)$ is a CBF for all $c\in(0,1)$.
\item[{\rm (iii)}] $-\psi_{\mu_c}(u)= \psi_\mu(cu) - \psi_\mu(u)$ is a CBF for all $c>1$.
\item[{\rm (iv)}] $u\cdot \psi_\mu'(u)$ is a CBF.
\item[{\rm (v)}] $\mu=\cL(\int_{(0,\infty)} e^{-t}dX_t)$ for some subordinator $(X_t)_{t\geq 0}$ with $\EE[\log^+(X_1)]<\infty$ and $\cL(X_1)\in \BO$.
\end{enumerate}
\end{corollary}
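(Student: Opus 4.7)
The proof follows the architecture of that of Proposition~\ref{prop:SD}, replacing Bernstein functions by complete Bernstein functions throughout and invoking the newly established Propositions~\ref{prop:bondessontoGGC} and \ref{prop:GGCfactorinBO} as the bridges between the classes $\T$, $\BO$ and the factor distributions $\mu_c$. The key translation is that $\cL(Y)\in\BO$ is \emph{defined} by the requirement that its Laplace exponent be a CBF (\cite[Def.~9.1]{rene-book}), so each of the conditions (ii)--(v) is a statement about $\psi_\mu$ directly comparable with its BF-counterpart in Proposition~\ref{prop:SD}.

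For the equivalences (i)$\Leftrightarrow$(ii) and (i)$\Leftrightarrow$(iii) I would apply Proposition~\ref{prop:GGCfactorinBO} verbatim, simply rewriting ``$\mu_c\in\BO$'' as ``$\psi_{\mu_c}$ is a CBF''. The mutual equivalence of (ii) and (iii) can in addition be obtained without going through $\T$, using that CBFs are stable under the dilation $f(u)\mapsto f(cu)$, as in the corresponding step of Proposition~\ref{prop:SD}. For (ii)$\Rightarrow$(iv) I would copy the argument from Proposition~\ref{prop:SD}: since CBFs form a convex cone, $(\psi_\mu(u)-\psi_\mu(cu))/(1-c)$ is a CBF for every $c\in(0,1)$, and its pointwise limit as $c\to 1$ is $u\psi'_\mu(u)$, which is therefore itself a CBF by the closure of the cone of CBFs under pointwise limits (the complete-Bernstein analogue of \cite[Cor.~3.8]{rene-book}).

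The implication (iv)$\Rightarrow$(v) proceeds exactly as in Proposition~\ref{prop:SD}: setting $\psi_X(u):=u\psi'_\mu(u)$ now yields the Laplace exponent of a subordinator $X$ with $\cL(X_1)\in\BO$, and \cite[Thm.~5.1(ii)]{BLMranges} identifies $\mu$ with $\cL(\int_{(0,\infty)} e^{-t}\,dX_t)$; convergence of this integral forces $\EE[\log^+(X_1)]<\infty$. Finally, (v)$\Rightarrow$(i) is a direct application of Proposition~\ref{prop:bondessontoGGC}, closing the chain of equivalences. The only point that is not a mechanical translation of Proposition~\ref{prop:SD} is the closure under pointwise limits of CBFs needed for (ii)$\Rightarrow$(iv); apart from locating the correct citation in \cite{rene-book}, the remainder of the argument is a literal copy of the proof of Proposition~\ref{prop:SD} with CBF in place of BF.
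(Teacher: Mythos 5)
Your proposal is correct and follows essentially the same route the paper intends: the corollary is stated as a summary whose proof is exactly the combination of Proposition~\ref{prop:bondessontoGGC}, Proposition~\ref{prop:GGCfactorinBO}, and the CBF-analogue of the limit argument in Proposition~\ref{prop:SD}, with the closure of CBFs under pointwise limits (which the paper cites as \cite[Cor.~7.6]{rene-book} in the proof of Proposition~\ref{prop:GGCfactorinBO}) supplying the step (ii)$\Rightarrow$(iv). Nothing is missing.
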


\section{Nested ranges} \label{sec:nested}
\setcounter{equation}{0}

In this section, we will consider what happens with the range $R_\xi^+$ when we modify the characteristics of $\xi$. This result has a counterpart in the case when $\xi$ is a Brownian motion (see \cite[Thm. 5]{BLMranges}), although here for some statements we have to restrict on $\SD \cap R_\xi^+$. That this restriction is truly necessary will subsequently be shown in Proposition \ref{prop:notnested}.

\begin{theorem} \label{thm:nested}
Let $(\xi_t)_{t\geq 0}$ be a L\'evy process with characteristic triplet $(\gamma, \sigma^2, \nu)$  and write $R^+(\gamma, \sigma^2, \nu):=R^+_\xi$.\\
Then if $\sigma^2\neq 0$
$$R^+(\gamma, \sigma^2, \nu) = R^+(\gamma/\sigma^2,1, \nu/\sigma^2).$$
Further for $\gamma'\geq \gamma $ it holds
\begin{equation} \label{eq:nested1}
 \SD\cap R^+(\gamma, \sigma^2, \nu) \subseteq  \SD\cap R^+(\gamma', \sigma^2, \nu),
\end{equation}
while assuming that $\nu((0,\infty))=0$ and $\int_{[-1,0)}|x|\nu(dx)<\infty$ we obtain
\begin{equation}\label{eq:nested2}
 R^+(\gamma, \sigma^2, \nu) \subseteq R^+(\gamma', \sigma^2, \lambda\nu)
\end{equation}
for all $\lambda\in(0,1]$ and $\gamma'$ such that $\gamma'-\gamma \geq -(1-\lambda)\int_{[-1,0)}x\nu(dx)$.
\end{theorem}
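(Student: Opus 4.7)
The plan is to prove all three assertions by manipulating \eqref{eq:relationwithjumps} at the level of Laplace exponents and invoking \cite[Thm.~3]{BLMranges}, which guarantees that any Bernstein function solving \eqref{eq:relationwithjumps} for a given characteristic triplet is the Laplace exponent of a subordinator whose exponential functional has law~$\mu$. The scaling identity falls out immediately: dividing both sides of \eqref{eq:relationwithjumps} by $\sigma^2>0$ produces exactly the relation for the triplet $(\gamma/\sigma^2,1,\nu/\sigma^2)$ on the right, while $\psi_\eta/\sigma^2$ on the left is still a BF (it is the Laplace exponent of the time-changed subordinator $(\eta_{t/\sigma^2})_{t\geq 0}$), and the reverse inclusion is the same calculation read backwards. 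For the second inclusion, replacing $\gamma$ by $\gamma'\geq \gamma$ in \eqref{eq:relationwithjumps} shifts the right-hand side by exactly $(\gamma'-\gamma)\,u\psi_\mu'(u)$; since $\mu\in\SD^+$, Proposition~\ref{prop:SD}(iv) says $u\psi_\mu'(u)$ is a BF, so $\psi_{\tilde\eta}:=\psi_\eta+(\gamma'-\gamma)u\psi_\mu'(u)$ is a BF and serves as the Laplace exponent of the required subordinator (with $\tilde\xi=\xi+(\gamma'-\gamma)t$ still drifting to $+\infty$).

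The third inclusion is where the real work happens. The hypothesis $\nu((0,\infty))=0$ makes $\xi$ spectrally negative, so by \cite{bertoinlindnermaller08} every $\mu\in R_\xi^+$ is automatically selfdecomposable --- this is precisely the point that lets us dispense with an explicit $\SD$ assumption. Using $\int_{[-1,0)}|y|\,\nu(dy)<\infty$ to absorb the $y\mathds{1}_{|y|\leq 1}$ compensator into the drift, a direct subtraction of \eqref{eq:relationwithjumps} for $(\gamma,\sigma^2,\nu)$ from the analogous expression for $(\gamma',\sigma^2,\lambda\nu)$ yields
\begin{equation*}
\psi_{\tilde\eta}(u)-\psi_\eta(u)=\alpha\,u\psi_\mu'(u)+(1-\lambda)\int_{(-\infty,0)}\bigl(1-e^{\psi_\mu(u)-\psi_\mu(ue^{-y})}\bigr)\,\nu(dy),
\end{equation*}
with $\alpha:=(\gamma'-\gamma)+(1-\lambda)\int_{[-1,0)}y\,\nu(dy)\geq 0$ by the stated assumption on $\gamma'$.

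The crux of the proof is then to verify that both summands on the right are Bernstein functions; once this is done, $\psi_{\tilde\eta}=\psi_\eta+(\text{BF})$ is a BF and \cite[Thm.~3]{BLMranges} delivers $\mu\in R^+(\gamma',\sigma^2,\lambda\nu)$. The first term, $\alpha u\psi_\mu'(u)$, is a non-negative multiple of the BF furnished by Proposition~\ref{prop:SD}(iv). For the integral, fix $y<0$, set $c:=e^{-y}>1$ and $h_y(u):=\psi_\mu(cu)-\psi_\mu(u)$; by Proposition~\ref{prop:SD}(iii) and $\mu\in\SD^+$, $h_y$ is a BF with $h_y(0)=0$, so $e^{-h_y(u)}$ is completely monotone (being the time-one Laplace transform of the subordinator with exponent $h_y$). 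Since $h_y'$ is CM as well, the derivative $h_y'(u)e^{-h_y(u)}$ of $1-e^{-h_y(u)}$ is a product of CM functions and hence CM; thus $1-e^{-h_y(u)}$ is itself a BF for each $y<0$. Integrating these pointwise BFs against the non-negative measure $\nu$ (convergence of the integral being inherited from the finiteness of the integral in \eqref{eq:relationwithjumps}) preserves the Bernstein-function property, finishing the argument.
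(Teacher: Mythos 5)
Your proof is correct, and for the two inclusions \eqref{eq:nested1} and \eqref{eq:nested2} it is essentially the paper's own argument: the same decomposition of the right-hand side of \eqref{eq:relationwithjumps} into the old Bernstein function plus $\alpha\, u\psi_\mu'(u)$ (nonnegative multiple of a BF by Proposition \ref{prop:SD}(iv), with $\alpha\geq 0$ exactly by the hypothesis on $\gamma'$) plus $(1-\lambda)\int_{\RR_-}\bigl(1-e^{\psi_\mu(u)-\psi_\mu(ue^{-y})}\bigr)\nu(dy)$, with spectral negativity supplying selfdecomposability of $\mu$ so that each $\psi_\mu(ue^{-y})-\psi_\mu(u)$ is a BF. The only cosmetic difference there is how you certify that $1-e^{-h_y(u)}$ is a BF: you differentiate and use closure of CM functions under products, while the paper writes $e^{-h_y(u)}=\int e^{-ut}\mu_{e^{-y}}(dt)$ and reads off the L\'evy--Khintchine form $\int(1-e^{-ut})\mu_{e^{-y}}(dt)$ directly; both are fine. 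Where you genuinely diverge is the scaling identity $R^+(\gamma,\sigma^2,\nu)=R^+(\gamma/\sigma^2,1,\nu/\sigma^2)$: the paper argues probabilistically via the L\'evy--It\^o decomposition and the self-similarity $(\sigma B_t)_t\overset{d}=(B_{\sigma^2 t})_t$, identifying $\Phi^+_\xi(\cL(\eta_1))=\Phi^+_{B+\tilde{\tilde\xi}}(\cL(\eta_{1/\sigma^2}))$, whereas you simply divide the characterizing equation \eqref{eq:relationwithjumps} by $\sigma^2$ and observe that this is verbatim the equation for the triplet $(\gamma/\sigma^2,1,\nu/\sigma^2)$ with subordinator exponent $\psi_\eta/\sigma^2$, which is still a BF; since \cite[Thm.~3]{BLMranges} is an equivalence, both inclusions follow at once. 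Your version is arguably shorter and stays entirely at the level of Laplace exponents; the paper's version makes the underlying time-change of the subordinator explicit. No gaps.
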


\begin{proof}
By the L\'evy-It\^o-decomposition we have $\xi_t=\sigma B_t + \tilde{\xi}_t$, where $\sigma=\sqrt{\sigma^2}$ and $(B_t)_{t\geq 0}$ is a standard Brownian motion and independent of $\tilde{\xi}_t$. Hence $(\sigma B_t)_{t\geq 0}\overset{d}= (B_{\sigma^{2}t})_{t\geq 0}$ and thus $(\sigma B_t+ \tilde{\xi}_t)_{t\geq 0}\overset{d}= (B_{\sigma^{2}t} + \tilde{\tilde{\xi}}_{\sigma^2t})_{t\geq 0}$ where $\tilde{\tilde{\xi}}$ has characteristic triplet $(\gamma/\sigma^2, 0 , \nu/\sigma^2)$.\\
This implies that for any subordinator $(\eta_t)_{t\geq 0}$, independent of $\xi$ and with $\cL(\eta_1) \in D_\xi^+$ 
$$\int_{(0,\infty)} e^{-\xi_t} d \eta_t = \int_{(0,\infty)} e^{-(\sigma B_t + \tilde{\xi}_t)} d \eta_t \overset{d}= \int_{(0,\infty)} e^{-(B_{\sigma^{2}t}+ \tilde{\tilde{\xi}}_{\sigma^{2}t})} d \eta_t = \int_{(0,\infty)} e^{-(B_{t}+ \tilde{\tilde{\xi}}_{t})} d \eta_{t/\sigma^2}.$$
Thus $\cL(\eta_{1/\sigma^2}) \in D_{B+\tilde{\tilde{\xi}}}^+$ and $\Phi^+_\xi(\cL(\eta_1))= \Phi^+_{B+\tilde{\tilde{\xi}}}(\cL(\eta_{1/\sigma^2}))$ from which we conclude the first assertion.

Now assume $\mu \in R^+(\gamma, \sigma^2, \nu)\cap \SD$, then by \cite[Thm. 3]{BLMranges}
\begin{align*}
 f_{\gamma}(u) = & (\gamma - \frac{\sigma^2}{2}) u \psi_\mu'(u) + \frac{\sigma^2}{2} u^2\left( (\psi'_\mu(u))^2 - \psi''_\mu(u) \right) \\
 & + \int_{\RR} \left( e^{\psi_\mu(u) - \psi_\mu(ue^{-y})} - 1 - u \psi'_\mu(u) y \mathds{1}_{|y|\leq 1}\right) \nu(dy), \quad u \geq  0,
\end{align*}
is the Laplace exponent of some subordinator, i.e. a BF.  Observe that for $\gamma'\geq \gamma$
\begin{align*}
f_{\gamma'}(u)= & f_{\gamma}(u) + (\gamma'-\gamma)  u \psi'_\mu(u).
\end{align*}
Since the set of BFs is a convex cone (cf. \cite[Cor. 3.8(i)]{rene-book}) and since by assumption $\mu\in\SD^+$ such that $u \psi'_\mu(u)$ is a BF, $f_{\gamma'}(u)$ is again a BF. Hence $\mu\in R^+(\gamma', \sigma^2, \nu)$ by \cite[Thm. 3]{BLMranges}. 

Finally, assume $\mu \in R^+(\gamma, \sigma^2, \nu)$ where $\nu((0,\infty))=0$ and $\int_{[-1,0)}|x|\nu(dx)<\infty$ and set for $\lambda\in(0,1]$
\begin{align*}
 g_{\lambda}(u) = & (\gamma_\lambda - \frac{\sigma^2}{2}) u \psi_\mu'(u) + \frac{\sigma^2}{2} u^2\left( (\psi'_\mu(u))^2 - \psi''_\mu(u) \right) \\
 & + \int_{\RR_-} \left( e^{\psi_\mu(u) - \psi_\mu(ue^{-y})} - 1 \right) \lambda \nu(dy), \quad u\geq  0,
\end{align*}
where $\gamma_\lambda:=\gamma -  \lambda\int_{[-1,0)} x \nu(dx)$,  then $g_1(u)$ is a BF by assumption.
For any $\lambda<1$ we observe that for $u >0$
\begin{align*}
 g_{\lambda}(u) =& g_1(u)+(1-\lambda)\int_{[-1,0)} x \nu(dx)u \psi_\mu'(u) +  (1-\lambda) \int_{\RR_-} \left( 1- e^{\psi_\mu(u) - \psi_\mu(ue^{-y})} \right) \nu(dy).
\end{align*}
Since $\xi$ is spectrally negative, $\mu$ is selfdecomposable and thus $\psi_\mu(ue^{-y})-\psi_\mu(u)$ is a BF for any negative $y$ by Proposition \ref{prop:SD} (it is the Laplace exponent of the $e^{-y}$-factor of $\mu$). Hence $e^{\psi_\mu(u) - \psi_\mu(ue^{-y})}$ is CM and we can write
$$e^{\psi_\mu(u) - \psi_\mu(ue^{-y})} = \int_{(0,\infty)} e^{-ut} \mu_{e^{-y}}(dt).$$
Thus for $u>0$
\begin{align*}
 g_{\lambda}(u) 
=& g_1(u)+(1-\lambda)\int_{[-1,0)} x \nu(dx)u \psi_\mu'(u)+ (1-\lambda) \int_{\RR_-} \int_{(0,\infty)} \mu_{e^{-y}}(dt)\nu(dy) \left( 1- e^{-ut} \right).
\end{align*}
Since $u \psi_\mu'(u)$ is a BF by Proposition \ref{prop:SD} and since all appearing integrals exist, we conclude that $g_\lambda(u)+(\gamma' - \gamma)u \psi_\mu'(u)$ is again a BF. Hence $\mu\in R^+(\gamma', \sigma^2, \lambda\nu)$ which proves \eqref{eq:nested2}.
\end{proof}

\begin{proposition} \label{prop:notnested}
Let $(\xi_t)_{t\geq 0}$ be a subordinator with drift $a>0$ and jump measure $\nu$ and set $R^+(a,\nu):=R^+_{\xi}$. 
Then for $a'>a$ we have 
$$\SD\cap R^+(a, \nu) \subseteq  \SD\cap R^+(a', \nu),$$
but
$$R^+(a, \nu) \setminus R^+(a', \nu) \neq \emptyset.$$
 \end{proposition}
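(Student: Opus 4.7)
The plan is as follows. For the inclusion $\SD\cap R^+(a,\nu)\subseteq \SD\cap R^+(a',\nu)$, I start from the characterizing identity \eqref{eq:relationwithjumps}. Specializing it to $\xi$ a subordinator (so $\sigma_\xi=0$ and $\gamma_\xi=a+\int_{(0,1]}y\,\nu(dy)$), the truncation term $u\psi_\mu'(u)\,y\,\mathds{1}_{|y|\leq 1}$ inside the jump integral absorbs exactly the $\int_{(0,1]}y\,\nu(dy)$ contribution to $\gamma_\xi$, so that $\mu\in R^+(a,\nu)$ is equivalent to
\[
f_a(u) := a\,u\,\psi_\mu'(u) + \int_{(0,\infty)}\bigl(e^{\psi_\mu(u)-\psi_\mu(ue^{-y})}-1\bigr)\,\nu(dy)
\]
being a Bernstein function. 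Since $f_{a'}(u)-f_a(u)=(a'-a)\,u\,\psi_\mu'(u)$, and Proposition~\ref{prop:SD}(iv) asserts that $u\psi_\mu'(u)$ is a BF whenever $\mu\in\SD$, $f_{a'}$ is then a sum of two BFs and the inclusion follows.

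For the second claim I produce a candidate $\mu$ as the image of a pure-drift $\eta_t=a_\eta t$ with $a_\eta>0$: set $\mu:=\Phi_\xi^+(\cL(\eta_1))$, which is well defined because $\xi_t\geq at\to\infty$ forces the integral to converge. By \cite[Lemma 1 and Thm.~1]{BLMranges} (as invoked in the proof of Theorem~\ref{thm:density}(ii)), $\mu$ has compact support $[0,a_\eta/a]$. Now suppose for contradiction that $\mu=\Phi_{\xi'}^+(\cL(\eta'_1))$ for some subordinator $\eta'$ and $\xi'$ of drift $a'$ with jump measure $\nu$. The same support result tells me that if $\eta'$ had any jumps the support of $\mu$ would be $[0,\infty)$; hence $\eta'$ must be pure drift, $\eta'_t=a_{\eta'}t$, and matching the right endpoints forces $a_{\eta'}=a'a_\eta/a$.

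Inserting $\psi_\eta(u)=a_\eta u$ and $\psi_{\eta'}(u)=(a'a_\eta/a)u$ into the two copies of the identity $f_a=\psi_\eta$ and $f_{a'}=\psi_{\eta'}$ and subtracting reduces to $\psi_\mu'(u)\equiv a_\eta/a$; integrating with $\psi_\mu(0)=0$ yields $\mu=\delta_{a_\eta/a}$. This is impossible as soon as $\nu\not\equiv 0$, because a direct Fubini computation gives
\[
\EE[V] \;=\; \frac{a_\eta}{a+\int_{(0,\infty)}(1-e^{-t})\,\nu(dt)} \;<\; \frac{a_\eta}{a},
\]
so $\mu$ cannot be a point mass at $a_\eta/a$, completing the contradiction. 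The step I expect to be hardest is the support reduction: it is what forces $\eta'$ to be pure drift and reduces the problem to a comparison of two very simple BFs, $a_\eta u$ against $(a'a_\eta/a)u$; without this structural input one would have to exclude all subordinator-valued $\eta'$ by hand, which appears intractable.
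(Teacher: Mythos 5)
Your proof is correct, and for the second claim it is actually \emph{more} complete than the paper's own argument. The paper also takes $\mu=\Phi_\xi(\delta_{a_\eta})$ for a pure-drift $\eta$ and then disposes of the matter purely by the support classification from \cite{BLMranges}: it asserts that every law in $R^+(a',\nu)$ has support $[0,\infty)$, $[0,1/a']$ or $\{1/a'\}$ and stops there. Read literally this overlooks exactly the case you isolate: a pure-drift $\eta'$ with drift $a_{\eta'}=a'a_\eta/a$ produces, under a non-deterministic $\xi'$ of drift $a'$, a law supported on $[0,a_\eta/a]$ as well, so the support argument alone cannot separate $\mu$ from $R^+(a',\nu)$. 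Your additional step --- subtracting the two instances of \eqref{eq:relationwithjumps} to force $u\psi_\mu'(u)=(a_\eta/a)u$, hence $\mu=\delta_{a_\eta/a}$, and refuting this by the first-moment identity $\EE[V]=a_\eta/(a+\int(1-e^{-t})\nu(dt))<a_\eta/a$ --- is precisely what is needed to close that residual case (one could equally note that $\mu$ is non-degenerate for non-deterministic $\xi$ by \cite[Prop.~6.1]{BLexpfunc}, so it cannot be a point mass). Your first part is the same computation as \eqref{eq:nested1} in Theorem~\ref{thm:nested}, which the paper simply cites.

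One caveat you should state explicitly rather than leave implicit in the phrase ``as soon as $\nu\not\equiv 0$'': the second assertion genuinely requires $\nu\neq 0$. If $\nu\equiv 0$ then $\xi_t=at$ and a time change $s=at$ shows $R^+(a,0)=R^+(a',0)=\SD^+$, so the set difference is empty; correspondingly, $\delta_{1/a}=\Phi_{\xi'}(\delta_{a'/a})\in R^+(a',0)$, contrary to what the paper's deterministic subcase suggests. Since the proposition speaks of a subordinator ``with jump measure $\nu$'', the intended reading is $\nu\neq 0$, and under that reading your argument is complete.
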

\begin{proof}
 The first statement has been shown in Theorem \ref{thm:nested}. \\
Let $\mu:=\Phi_{\xi^{(a)}}(\delta_1)$ be the law of $\int_{(0,\infty)} e^{-\xi^{(a)}_t} dt$, then $\mu\in R^+(a, \nu)$ with $\supp\mu=[0,\frac1a]$ in case of a non-deterministic $\xi$ and  $\supp\mu=\{\frac1a\}$ if $\xi$ is deterministic (cf. \cite[Lemma 2.1]{BLMranges}). \\
On the other hand by \cite[Lemma 2.1 and Thm. 2.2]{BLMranges} all distributions in $R^+(a',\nu)$ have support $[0,\infty)$, $[0, \frac{1}{a'}]$ ($\xi$ non-deterministic) or $\{\frac{1}{a'}\}$ ($\xi$ deterministic). Hence $\mu\not\in R^+(a',\nu)$. 
\end{proof}

In case of varying jump heights, nested ranges cannot be expected. To illustrate this, we consider the case of Poisson processes with varying jump height in which we can fully describe the range as we shall do in the following proposition, which also improves the previous result \cite[Prop. 6.3]{BLexpfunc}.

\begin{proposition}
 Assume that $\xi_t= c N_t$ for a Poisson process $N=(N_t)_{t\geq 0}$ with intensity $\lambda$ and some $c>0$. Then
\begin{align}\label{eq-rangePP}
 R_\xi^+& = \{\mu \in \SD_{e^{-c}} \mbox{ with compound exponentially distributed $e^{-c}$-factor}\} \\
&= \{\mu \in \cP^+, \mbox{ s.t. } \psi_\mu(u) = \lim_{n\to\infty} \log\left(\frac{\prod_{k=0}^{n-1} (f(e^{-kc} u) +\lambda)}{\lambda^n}\right) \mbox{ for some BF }f \}.\nonumber
\end{align}
\end{proposition}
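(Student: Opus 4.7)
The plan is to specialize \eqref{eq:relationwithjumps} to $\xi_t = cN_t$ and then translate the resulting algebraic identity between $\psi_\mu$ and $\psi_\eta$ into the two claimed descriptions of $R_\xi^+$. With $\sigma_\xi=0$, $\nu_\xi = \lambda\delta_c$, and drift $\gamma_\xi = \lambda c\,\mathds{1}_{c\le 1}$, one has $\gamma_0=0$; the integral in \eqref{eq:relationwithjumps} collapses to a single evaluation at $y=c$, and the compensator term cancels the linear drift, leaving
$$\psi_\eta(u) \;=\; \lambda\bigl(e^{\psi_\mu(u) - \psi_\mu(e^{-c} u)} - 1\bigr), \qquad u>0,$$
equivalently
$$\psi_\mu(u) - \psi_\mu(e^{-c}u) \;=\; \log\!\Bigl(1 + \tfrac{\psi_\eta(u)}{\lambda}\Bigr) \;=:\; g(u).$$
This single identity will drive the entire argument.

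Since $\log(1+\cdot)$ is a CBF and $\psi_\eta/\lambda$ is a BF, $g$ is itself a BF, so by \eqref{eq-SD-bernsteindifferenz} the identity above exhibits $\mu$ as $e^{-c}$-decomposable with $e^{-c}$-factor having Laplace transform
$$e^{-g(u)} \;=\; \frac{\lambda}{\lambda + \psi_\eta(u)} \;=\; \int_0^\infty \lambda e^{-\lambda t}\,\EE[e^{-u\eta_t}]\,dt \;=\; \EE[e^{-u\eta_T}],$$
where $T\sim\mathrm{Exp}(\lambda)$ is independent of $\eta$ --- the ``compound exponentially distributed'' law of the proposition. Reading the identity in the reverse direction, any such $\mu$ uniquely determines $f:=\psi_\eta$, and the converse half of Theorem~\ref{thm:density} recovers $\mu = \Phi_\xi^+(\cL(\eta_1))$, giving the first equality in \eqref{eq-rangePP}.

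For the second equality I would iterate: by \eqref{eq:cnfactor}, $\psi_{\mu_{e^{-nc}}}(u) = \sum_{k=0}^{n-1}g(e^{-kc}u)$, and since $\mu = \cL(e^{-nc}X)\ast\mu_{e^{-nc}}$ with $X\sim\mu$ and $e^{-nc}X\to 0$ in probability, $\mu_{e^{-nc}}\to\mu$ weakly, so the Laplace exponents converge pointwise:
$$\psi_\mu(u) \;=\; \sum_{k=0}^\infty g(e^{-kc}u) \;=\; \lim_{n\to\infty}\log\!\Bigl(\tfrac{\prod_{k=0}^{n-1}(\psi_\eta(e^{-kc}u)+\lambda)}{\lambda^n}\Bigr).$$
Conversely, any BF $f$ for which the above limit exists determines a subordinator $\eta$ with $\psi_\eta=f$, and reinserting into \eqref{eq:relationwithjumps} together with the converse part of Theorem~\ref{thm:density} identifies the limit with $\psi_{\Phi_\xi^+(\cL(\eta_1))}$.

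The main obstacle is the convergence bookkeeping. The sum $\sum_k \log(1+f(e^{-kc}u)/\lambda)$ is finite for every $u>0$ iff $\sum_k f(e^{-kc}u)<\infty$, and via the Lévy-Khintchine form \eqref{eq:BF} of $f$ together with a Tonelli interchange this is equivalent to the log-moment condition $\int_{[1,\infty)}\log t\,\nu_\eta(dt)<\infty$. By \cite{ericksonmaller05} this is precisely $\cL(\eta_1)\in D_\xi^+$, so the admissible BFs $f$ on the right-hand side parametrize exactly the domain of $\Phi_\xi^+$, and the three descriptions of $R_\xi^+$ coincide.
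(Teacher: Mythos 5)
Your proof is correct and takes essentially the same route as the paper: specialize \eqref{eq:relationwithjumps} to $\psi_\eta(u)=\lambda\bigl(e^{\psi_\mu(u)-\psi_\mu(e^{-c}u)}-1\bigr)$, identify the $e^{-c}$-factor as the compound exponential law $\cL(\eta_T)$, and iterate via \eqref{eq:cnfactor} to obtain the product formula. The only slip is that the converse step should invoke the characterization of $R_\xi^+$ through \eqref{eq:relationwithjumps}, i.e.\ \cite[Thm.~3]{BLMranges}, rather than the converse part of Theorem~\ref{thm:density}, which presupposes an absolutely continuous $\mu$.
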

\begin{proof}
In the present case \eqref{eq:relationwithjumps} reduces to
 \begin{equation} \label{eq-relationPP}
 \psi_\eta (u) = \lambda e^{\psi_\mu(u) - \psi_\mu(ue^{-c})} - \lambda , \quad u >0.
\end{equation}
Set $\tilde{c}=e^{-c}$, then this is equivalent to
$$ \psi_{\mu_{\tilde{c}}}(u)=\psi_\mu(u) - \psi_\mu(u\tilde{c})= \log\left(\frac{\psi_\eta (u) +\lambda}{\lambda}\right), $$
i.e. $ \psi_{\mu_{\tilde{c}}}(u)$ is the Laplace exponent of a compound exponential distribution  - the distribution of $\eta_T$ for some exponential random variable $T$, independent of $\eta$ - and hence it is the Laplace exponent of an infinitely divisible distribution (cf. \cite[Chapter 3, Thm. 3.6]{steutelvanharn}), i.e. a BF. This proves the first equation in \eqref{eq-rangePP}.  \\
 By iterating and taking limits we further obtain
$$\psi_\mu (u)= \lim_{n\to \infty} \psi_{\mu_{\tilde{c}^n}}(u) = \lim_{n\to\infty} \sum_{k=0}^{n-1} \log\left(\frac{\psi_\eta (\tilde{c}^k u) +\lambda}{\lambda}\right) = \lim_{n\to\infty} \log\left(\frac{\prod_{k=0}^{n-1} (\psi_\eta (\tilde{c}^k u) +\lambda)}{\lambda^n}\right)$$
which proves the second equality in \eqref{eq-rangePP}.  
\end{proof}

\begin{remarks}{\rm 
\begin{enumerate}
\item 
Although for $n\in\NN$ we have $\SD^+(e^{-c})\subseteq \SD^+(e^{-nc})$, the ranges $R^+_{\xi^{(n)}}$ for $\xi_t^{(n)}= nc N_t$ with $(N_t)_{t\in\NN}$ being a Poisson process are in general not nested. In fact, assume that $\mu\in  R_{\xi^{(1)}}^+ \subset \SD^+(e^{-c})\subseteq \SD^+(e^{-nc})$ is given. Then it can be seen from \eqref{eq:cnfactor} that the $e^{-nc}$-factor of $\mu$ has the same distribution as an independent sum of (scaled) compound exponentially distributed random variables. Such sums are in general not compound exponentially distributed. A counterexample can be constructed using the Gamma($k,\theta$) distribution with Laplace transform $\LL(u)= (\frac{\theta}{\theta+u})^{k}$, which is a compound exponential distribution if and only if $k\leq 1$ (cf. \cite[Chapter III, Ex. 5.4]{steutelvanharn}). The convolution of a Gamma($k,\theta$) distribution and a scaled Gamma($k,\theta$) distribution with Laplace transform $\LL(e^{-c} u)= (\frac{\theta}{\theta+e^{-c} u})^{k}$ is no 
compound exponential distribution. This can be seen by applying \cite[Chapter III, Thm. 5.1]{steutelvanharn} and using simple algebra to observe that $\frac{d}{du} (\LL(u)\LL(e^{-c}u))^{-1}$ is not CM.  
\item Since BFs grow at most linearly (cf. \cite[Cor. 3.8 (viii)]{rene-book}), the above proposition implies that in the given setting $\psi_\mu (u)=o(u^\alpha)$ for any $\alpha>0$. Hence $\psi_\mu$ has zero drift and also no polynomial part (in particular $\mu$ can not be stable).
\end{enumerate}
 }\end{remarks}

\section{Selfdecomposable distributions in the range} \label{sec:selfdec}
\setcounter{equation}{0}

In this section, we derive a general criterion for a probability distribution to be in $R_\xi^+$ for a spectrally negative L\'evy process $\xi$. Recall that in this case $R_\xi^+ \subseteq \SD^+$.

\begin{theorem}\label{thm:selfdec}
 Let $\mu\in\SD^+$. Assume that $\xi=(\xi_t)_{t\geq 0}$ is a L\'evy process with characteristic triplet $(\gamma_\xi, \sigma^2_\xi, \nu_\xi)$ such that $\nu_\xi((0,\infty))=0$, $\int_{[-1,0)}|x|\nu_\xi(dx)<\infty$ and $\lim_{t\to \infty}\xi_t=\infty$.\\
Set $\gamma_0:= \gamma_\xi - \int_{[-1,0)}x\nu_\xi(dx)>0$, let $\nu_X$ be the L\'evy measure of the L\'evy process $X$ which is related to $\mu$ via \eqref{eq-SD-integral} and let $\mu_{c}$, $c>1$, be the $c$-factor distribution of $\mu$ as defined in Definition~\ref{factors}.
\begin{enumerate}
 \item If $\sigma_\xi^2=0$, then $\mu\in R_\xi^+$ if and only if
\begin{align} \label{eq:conditionlm1}
 G_1:(0,\infty)&\to[0,\infty)\\
 t&\mapsto \gamma_0  \nu_X((0,t))- \int_{\RR_-} \mu_{e^{-x}}((0,t)) \nu_\xi(dx) \nonumber 
\end{align}
is non-decreasing.
In this case $\mu=\cL(\int_0^\infty e^{-\xi_{t-}}d\eta_t)$, where $\eta$ is a subordinator, independent of $\xi$, with L\'evy measure $\nu_\eta(dt)=dG(t)$ and drift $a_\eta=\gamma_0 a\geq 0$ where $a\geq 0$ denotes the drift of $\mu$.
\item If $\sigma_\xi^2>0$, assume that $\nu_\xi(\RR_-)<\infty$ and $\nu_X(\RR_+)<\infty$. Then $\mu\in R_\xi^+$ if and only if $\mu$ has zero drift and $\nu_X$ has a density $g(t), t\geq 0,$ such that
\begin{equation}\label{eq:propdichtevonX}
 \lim_{t\to \infty} tg(t) = \lim_{t\to 0} t g(t) = 0,
\end{equation}
and such that
\begin{align} \label{eq-conditionlm2}
 G_2:(0,\infty)&\to[0,\infty)\\
 t&\mapsto  (\gamma_0 + \sigma^2_\xi \nu_X(\RR_+))\int_0^t g(u) du + \frac{\sigma_\xi^2 }{2} tg(t)- \frac{\sigma_\xi^2 }{2} \int_0^t (g\ast g)(u) du \nonumber \\ & \quad \quad - \int_{\RR_-} \mu_{e^{-y}}((0,t)) \nu_\xi(dy) \nonumber
\end{align}
is non-decreasing.
In this case $\mu=\cL(\int_0^\infty e^{-\xi_{t-}}d\eta_t)$, where $\eta$ is a subordinator, independent of $\xi$, with L\'evy measure $\nu_\eta(dt)=dG(t)$ and zero drift.
\end{enumerate}
\end{theorem}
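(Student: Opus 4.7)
\textbf{Proof plan for Theorem \ref{thm:selfdec}.}
The strategy is to start from the characterization of the range via \eqref{eq:relationwithjumps} (equivalently, the criterion of \cite[Thm. 3]{BLMranges}): $\mu\in R_\xi^+$ if and only if the right-hand side of \eqref{eq:relationwithjumps} is the Laplace exponent of some subordinator, i.e.\ a Bernstein function of the form $a_\eta u + \int_0^\infty (1-e^{-ut})\nu_\eta(dt)$. Since $\xi$ is spectrally negative and $\int_{[-1,0)}|x|\nu_\xi(dx)<\infty$, absorbing the compensator yields
\[
 \psi_\eta(u)=\bigl(\gamma_0-\tfrac{\sigma_\xi^2}{2}\bigr)u\psi_\mu'(u)+\tfrac{\sigma_\xi^2}{2}u^2\bigl((\psi_\mu'(u))^2-\psi_\mu''(u)\bigr)+\int_{\RR_-}\bigl(e^{\psi_\mu(u)-\psi_\mu(ue^{-y})}-1\bigr)\nu_\xi(dy).
\]
The task is therefore to decompose each term into its drift part and a $(1-e^{-ut})$-integral with identifiable jump-density, and to read off when the resulting L\'evy measure is nonnegative.

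The two ingredients coming from $\mu\in\SD^+$ are: (a) by Proposition~\ref{prop:SD}, $\psi_X(u):=u\psi_\mu'(u)$ is the Laplace exponent of a subordinator $X$ whose drift equals the drift $a$ of $\mu$ and whose L\'evy measure is $\nu_X$ (so $\psi_X(u)=au+\int_0^\infty(1-e^{-ut})\nu_X(dt)$); (b) for $y<0$, $\psi_\mu(ue^{-y})-\psi_\mu(u)=\psi_{\mu_{e^{-y}}}(u)$ is a BF (the $e^{-y}$-factor, Definition~\ref{factors}), so $e^{\psi_\mu(u)-\psi_\mu(ue^{-y})}=\LL_{\mu_{e^{-y}}}(u)=\int_0^\infty e^{-ut}\mu_{e^{-y}}(dt)$. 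Since $\mu_{e^{-y}}$ is a probability measure, one gets $e^{\psi_\mu(u)-\psi_\mu(ue^{-y})}-1=-\int_0^\infty(1-e^{-ut})\mu_{e^{-y}}(dt)$, and Tonelli on the nonnegative integrand yields
\[
 \int_{\RR_-}\bigl(e^{\psi_\mu(u)-\psi_\mu(ue^{-y})}-1\bigr)\nu_\xi(dy)=-\int_0^\infty(1-e^{-ut})\,\rho(dt),\qquad \rho(dt):=\int_{\RR_-}\mu_{e^{-y}}(dt)\,\nu_\xi(dy).
\]

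For case (i), $\sigma_\xi=0$, plugging in (a) and the above identity gives directly
$\psi_\eta(u)=\gamma_0 a\,u+\int_0^\infty(1-e^{-ut})\bigl(\gamma_0\,\nu_X(dt)-\rho(dt)\bigr)$.
Hence $\psi_\eta$ is a BF iff the signed measure $\nu_\eta:=\gamma_0\nu_X-\rho$ is nonnegative, which is exactly the monotonicity of $G_1(t)=\nu_\eta((0,t))$, with drift $a_\eta=\gamma_0 a$. The converse follows by constructing the subordinator $\eta$ from $(a_\eta,\nu_\eta)$ and applying \cite[Thm.~3]{BLMranges} in reverse.

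For case (ii) the main work sits in the quadratic term. First, $u^2(\psi_\mu'(u))^2=\psi_X(u)^2$; expanding $\psi_X(u)=au+\int_0^\infty(1-e^{-ut})g(t)dt$ produces the monomial $a^2u^2$, which cannot be cancelled (a BF grows at most linearly, \cite[Cor.~3.8(viii)]{rene-book}), forcing $a=0$. Under $a=0$, the convolution theorem gives
$\psi_X(u)^2=\int_0^\infty(1-e^{-ut})\bigl(2c_X g(t)-(g\ast g)(t)\bigr)dt$ with $c_X=\nu_X(\RR_+)<\infty$. Secondly, $u^2\psi_\mu''(u)=u\psi_X'(u)-\psi_X(u)$, and an integration by parts using $ue^{-ut}=\frac{d}{dt}(1-e^{-ut})$ with boundary terms vanishing precisely under $\lim_{t\to 0}tg(t)=\lim_{t\to\infty}tg(t)=0$ yields
$u\psi_X'(u)=-\int_0^\infty(1-e^{-ut})(g(t)+tg'(t))dt$, hence
$u^2\psi_\mu''(u)=-\int_0^\infty(1-e^{-ut})(2g(t)+tg'(t))dt$. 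Collecting the four contributions (term~1, $\frac{\sigma_\xi^2}{2}\psi_X^2$, $-\frac{\sigma_\xi^2}{2}u^2\psi_\mu''$, and the $\rho$-term), the resulting density of $\tilde\nu$ is exactly $G_2'(t)$ — indeed $G_2(t)=\tilde\nu((0,t))$ by direct comparison, noting that the term $\frac{\sigma_\xi^2}{2}tg(t)$ in \eqref{eq-conditionlm2} has derivative $\frac{\sigma_\xi^2}{2}(g(t)+tg'(t))$. Positivity of $\tilde\nu$ is then the monotonicity of $G_2$, no linear term survives (so $a_\eta=0$), and the converse again follows by assembling $\eta$ from $(0,dG_2)$ and invoking \cite[Thm.~3]{BLMranges}.

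The main obstacle is the algebraic bookkeeping in case (ii): one must simultaneously (i) prove the necessity of $a=0$, (ii) carry out the integration by parts for $u\psi_X'(u)$ with rigorous justification of the boundary behaviour (whence the assumption \eqref{eq:propdichtevonX}), (iii) apply the convolution theorem to $\psi_X^2$ (whence the assumption $\nu_X(\RR_+)<\infty$), and (iv) apply Tonelli to the $\nu_\xi$-integral (for which $\nu_\xi(\RR_-)<\infty$ ensures $\rho$ is a finite measure), and verify that these manipulations reassemble precisely into $dG_2(t)$. The integrability $\int_0^\infty(1\wedge t)\,dG_2(t)<\infty$ then follows automatically from the finiteness of $\psi_\eta(u)$ for $u>0$.
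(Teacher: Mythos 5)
Your overall route is the same as the paper's: invoke the Bernstein-function criterion of \cite[Thm.~3]{BLMranges}, use that $\psi_X(u)=u\psi_\mu'(u)$ is a BF and that $e^{\psi_\mu(u)-\psi_\mu(ue^{-y})}=\int_0^\infty e^{-ut}\mu_{e^{-y}}(dt)$ via the $c$-factors, and then read off nonnegativity of the resulting signed L\'evy measure as monotonicity of $G_1$ resp.\ $G_2$. Case (i) matches the paper's proof exactly, and your identity for $\psi_X(u)^2$ agrees with \eqref{eq:selfdecpsisquare}. Your argument for $a=0$ in case (ii) (the uncancellable $a^2u^2$ term versus at most linear growth of BFs) is a legitimate alternative to the paper's, which instead deduces zero drift from the support result $\supp\mu=[0,\infty)$ of \cite[Lemma 1 and Thm.~1]{BLMranges}.

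There is, however, a gap in the necessity direction of case (ii). The theorem asserts that $\mu\in R_\xi^+$ \emph{implies} that $\nu_X$ is absolutely continuous with a density $g$ satisfying \eqref{eq:propdichtevonX}; in your plan these appear only as hypotheses under which the integration by parts of $u\psi_X'(u)$ goes through ("with boundary terms vanishing precisely under $\lim tg(t)=0$", "whence the assumption \eqref{eq:propdichtevonX}"). That establishes sufficiency but not necessity: you never show that a general $\mu\in\SD^+\cap R_\xi^+$ cannot have a L\'evy measure $\nu_X$ with a singular part, or a density with nonvanishing boundary behaviour. The paper closes this by turning the argument around: writing $f(u)=bu+\int_0^\infty(1-e^{-ut})\nu(dt)$ and solving for $\tfrac{\sigma_\xi^2}{2}u\psi_X'(u)$ as a difference of two explicit Bernstein functions with L\'evy measures $\rho_1$ and $\rho_2$, whence (by the argument of \cite[Thm.~7(i)]{BLMranges}) $b=0$, $\nu_X$ has the density $g(t)=\tfrac{2}{\sigma_\xi^2 t}\bigl(\rho_1((t,\infty))-\rho_2((t,\infty))\bigr)$, and \eqref{eq:propdichtevonX} holds. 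You need this step (or an equivalent one) for the "only if" half; without it your case (ii) proves only one implication.
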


\begin{proof}
Observe that $\gamma_0>0$, since $\EE[\xi_1]>0$ where
$$ \EE[\xi_1] = \gamma_\xi + \int_{(-\infty,-1)} x\nu_\xi(dx) =  \gamma_0 + \int_{\RR_-}x\nu_\xi(dx)= \gamma_0 - \int_{\RR_-}|x|\nu_\xi(dx).$$
By \cite[Thm. 3]{BLMranges} a probability distribution $\mu\in \cP^+$ is in $R_\xi^+$ for the given $\xi$ if and only if 
\begin{align*} 
 f (u):= & \left(\gamma_\xi - \frac{\sigma^2_\xi}{2}\right) u \psi_\mu'(u) + \frac{\sigma^2_\xi}{2} u^2\left( (\psi'_\mu(u))^2 - \psi''_\mu(u) \right)
\\ & + \int_{\RR_-} \left( e^{-(\psi_\mu(ue^{-y})-\psi_\mu(u))} - 1 - u\psi_\mu'(u) y \mathds{1}_{|y|\leq 1}\right) \nu_\xi(dy) 
\end{align*}
defines a BF. Since $\mu\in \SD^+$, the functions $\psi_X(u)=u\psi'_\mu(u)$ and $-\psi_{\mu_c}(u) = \psi_\mu(cu) - \psi_\mu(u)$, $c>1$, are again BFs by Proposition \ref{prop:SD} and 
\begin{align*} 
 f (u) 
= &\,  \gamma_0 \psi_X(u) + \frac{\sigma^2_\xi}{2} \left( (\psi_X(u))^2 - u  \psi'_X(u) \right) + \int_{\RR_-} \left( \exp(\psi_{\mu_{e^{-y}}}(u)) - 1 \right) \nu_\xi(dy).
\end{align*}
As $\mu_c$ is the $c$-factor of $\mu$ we have $e^{\psi_{\mu_{c}}(u)} = \int_{[0,\infty)} e^{-ut} \mu_{c}(dt)$, and therefore 
\begin{align}\label{eq:exponentinsd} 
 f (u)= &\,  \gamma_0 \psi_X(u) + \frac{\sigma^2_\xi}{2} \left( (\psi_X(u))^2 - u  \psi'_X(u) \right) + \int_{(0,\infty)} (e^{-ut}-1)  \int_{\RR_-} \mu_{e^{-y}}(dt) \nu_\xi(dy).
\end{align}
Now assume that $\sigma_\xi^2=0$ and let $a\geq 0$ denote the drift of $\mu$, then it follows via \cite[Lemma 1 and Thm. 1]{BLMranges} that $X$ has drift $a$ such that
$$\psi_X (u)= a u + \int_{(0,\infty)} \left(1- e^{-uy} \right) \nu_X(dy),$$
and inserting this in \eqref{eq:exponentinsd} we obtain
\begin{align*} 
 f(u)= &\,  \gamma_0 a u  + \int_{(0,\infty)} (1-e^{-ut})  [\gamma_0 \nu_X(dt) -  \int_{\RR_-} \mu_{e^{-y}}(dt) \nu_\xi(dy)].
\end{align*}
For $f$ to be a BF it is now necessary and sufficient that $\nu_\eta$ defined via 
$$\nu_\eta(dt):= \gamma_0 \nu_X(dt) -  \int_{\RR_-} \mu_{e^{-y}}(dt) \nu_\xi(dy)$$
 is a L\'evy measure, which holds if and only if $G_1$ is non-decreasing.\\
In the case that $\sigma_\xi^2>0$ first observe that from \cite[Lemma 1 and Thm. 1]{BLMranges} we know that supp$\mu=[0,\infty)$ which implies that $\mu$ has drift $0$ and so does $X$.
Further under the assumption that $\nu_X(\RR_+)<\infty$ we obtain as in the proof of \cite[Thm. 7]{BLMranges} that
\begin{equation}\label{eq:selfdecpsisquare}
 (\psi_X (u))^2= \int_{(0,\infty)} (1-e^{-ut})[2\nu_X(\RR_+) \nu_X - \nu_X\ast \nu_X](dt).
\end{equation}
Now suppose $\mu\in R_\xi^+$, then $f$ is a BF, i.e. $f(u)=bu + \int_{(0,\infty)} (1-e^{-ut}) \nu(dt)$,  and we obtain from \eqref{eq:exponentinsd}
\begin{align*}
 \frac{\sigma^2_\xi}{2} u  \psi'_X(u) &= -bu + \int_{(0,\infty)} (1-e^{-ut}) \rho_1(dt) - \int_{(0,\infty)} (1-e^{-ut}) \rho_2(dt)
\end{align*}
where
\begin{align*}
 \rho_1(dt)&:= (\gamma_0 + \sigma^2_\xi \nu_X(\RR_+)) \nu_X(dt) + \int_{\RR_-} \mu_{e^{-y}}(dt) \nu_\xi(dy) \\   \rho_2(dt)&:= \nu(dt)+ \frac{\sigma^2_\xi}{2} \nu_X\ast \nu_X(dt)
\end{align*}
Proceeding as in the proof of \cite[Thm. 7(i)]{BLMranges} this shows $b=0$ and that $\nu_X$ has the density 
$$g(t)=\frac{2}{\sigma_\xi^2 t} (\rho_1(t, \infty)- \rho_2(t,\infty)), \quad t> 0.$$
 Since $\nu_\xi(\RR_-)<\infty$ and $\nu_X(\RR_+)<\infty$, similarly to the argumentation in \cite[Thm. 7(i)]{BLMranges}, it follows that \eqref{eq:propdichtevonX} holds and finally that
\begin{align*}
 \nu(dt)= (\gamma_0 + \sigma^2_\xi \nu_X(\RR_+))g(t) dt + \frac{\sigma_\xi^2}{2} d(tg(t))- \frac{\sigma_\xi^2 }{2} (g\ast g)(t) dt - \int_{\RR_-} \mu_{e^{-y}}(dt) \nu_\xi(dy).
\end{align*}
Thus, if $\mu\in R_\xi^+$, then $\nu(dt)$ has to be a L\'evy measure, which proves that $G_2$ is non-decreasing. Conversely, if $G_2$ is non-decreasing, define a subordinator $\eta$ with L\'evy measure $\nu(dt)=dG(t)$ and zero drift, then reverting the above, it follows from \cite[Thm. 3]{BLMranges} that $\mu\in R_\xi^+$.
\end{proof}


\begin{example} \rm
Consider the COGARCH volatility process as introduced in Example \ref{ex:COGARCH}. In this case the process $\xi$ has no gaussian part, L\'evy measure $\nu_\xi=T(\nu_S)$ for the transformation  $T:s\mapsto - \log (1+\varphi s)$ and $\gamma_0=\eta>0$.\\
Since the integrating process in the case of the COGARCH is deterministic $t\mapsto \beta t$, its L\'evy measure is zero and we conclude from Theorem \ref{thm:selfdec}(i) above that the measure $\mu\in \SD^+$, which is the stationary distribution of the COGARCH volatility, has to have drift $a=\frac{\beta}{\eta}$ and that it has to fulfill
\begin{equation} \label{eq:COGARCHmeasures}
 \eta \nu_X(dt)= \int_{\RR_-}\mu_{e^{-x}}(dt) \nu_\xi(dx)= \int_{\RR_+}\mu_{1+\varphi s}(dt) \nu_S(ds),
\end{equation}
where $X$ is connected to $\mu$ via \eqref{eq-SD-bernsteinableitung}.\\
Observe that it follows directly from this, that $$k(0+)= \nu_X(\RR_+)= \eta^{-1}\nu_S(\RR_+),$$ where $k(t), t>0,$ is the factor of the L\'evy density of $\mu$ as in \eqref{eq:selfdecomposablebernstein}.\\
Assuming e.g. that $(S_t)_{t\geq 0}$ is a Poisson process with intensity $c>0$ , we further obtain from \eqref{eq:COGARCHmeasures} that
$$\eta \nu_X(dt) =  c \mu_{1+\varphi}(dt),$$
where $\mu_{1+\varphi}$ has the Laplace exponent $\psi_\mu((1+\varphi)u)-\psi_\mu(u)$. Hence in this case, with   \eqref{eq:selfdecomposablebernstein} and \eqref{eq-SD-bernsteinableitung}
one can deduce the following equation for the L\'evy density $m(t)=k(t)/t$, $t>0$, of $\mu$, 
$$\frac{\eta}{c} \int_{(0,\infty)} e^{-ut} t dm(t) + \frac{\eta}{c} \int_{(0,\infty)} e^{-ut} m(t) dt= - \exp\left(- \frac{\beta}{\eta}\varphi u - \int_{(0,\infty)} (1-e^{-\varphi ut}) e^{-ut} m(t) dt\right).$$
\end{example}\medskip

\begin{example} \rm
 Assume $\mu$ is positive strictly stable with index $\alpha\in(0,1)$, i.e. $\psi_\mu(u)=cu^\alpha$, for some $c>0$ and let $(\xi_t)_{t\geq 0}$ be a L\'evy process without gaussian part and which fulfills the assumptions of Theorem \ref{thm:selfdec}. Then 
$\mu \in R_\xi^+$ if and only if
\begin{align*}
 \nu(dt)
& = \gamma_0 \frac{c \alpha^2}{\Gamma(1-\alpha)} t^{-(1+\alpha)} dt -  \int_{(0,\infty)} \mu_{e^{-x}}(dt)\nu_{\xi}(dx)
\end{align*}
defines a L\'evy measure. In particular observe that $\mu_{e^{-x}}$ has Laplace exponent $cu^\alpha(e^{-\alpha x}-1)$ and hence $\nu_Y(dt):= \int_{(0,\infty)} \mu_{e^{-x}}(dt)\nu_{\xi}(dx)$ can be interpreted as the L\'evy measure of $(Y_t)_{t\geq 0}$ where $Y_t=S_{\tilde{\xi}_t}$, with $S=(S_t)_{t\geq 0}$ a strictly $\alpha$-stable subordinator with  $\psi_S(u)=u^\alpha$ and $\tilde{\xi}$ a pure-jump subordinator with L\'evy measure $\nu_{\tilde{\xi}}=T(\nu_\xi)$ for the transformation $T:x\mapsto c(e^{-\alpha x}-1)$ (see e.g. \cite[Thm. 30.1]{sato}).
\end{example}

\section{GGCs in the range}\label{sec:ggc}
\setcounter{equation}{0}

There exist several examples of exponential functionals whose distributions are generalized Gamma convolutions. Just recall Proposition \ref{prop:bondessontoGGC} or the example mentioned in the introduction, which states that $\int_{(0,\infty)} e^{-(\sigma B_t + at)} dt$ has an inverse Gamma distribution which is a GGC, where $(B_t)_{t\geq 0}$ is a Brownian motion and $\sigma, a>0$. 
Further explicit examples of exponential functionals whose distributions are generalized Gamma convolutions can also be found in \cite{BehmeMaejima} and \cite{BehmeBondesson}.\\
As generalized Gamma convolutions are selfdecomposable, one can also directly transfer the results from the last section to obtain conditions on GGCs to be in the range $R_\xi$ for a given process $\xi$. Together with the results in Section~\ref{sec:prelim} this then yields the following example.

\begin{example}\rm 
Let $\mu\in \T$ have the Laplace exponent \eqref{eq:thorinbernstein} with $a\geq 0$ and $k(0+)<\infty$, $k'(0+)>-\infty$ and $k(t)\not\equiv 0$. Then by Corollary \ref{cor:GGC} the L\'evy measure $\nu_X(dt)$ of the L\'evy process $X$ which is related to $\mu$ via \eqref{eq-SD-integral} has a density $m(t), t\geq 0$, which is CM, that is $\nu_X((0,t))= \int_{(0,t)} m(s) ds$.  \\
Assume that $\xi=(\xi_t)_{t\geq 0}$ is a L\'evy process with characteristic triplet $(\gamma_\xi, 0, \nu_\xi)$ such that $\nu_\xi((0,\infty))=0$, $\int_{[-1,0)}|x|\nu_\xi(dx)<\infty$, $\nu_\xi\not \equiv 0$  and $\lim_{t\to \infty}\xi_t=\infty$.\\
Set $\gamma_0:= \gamma_\xi - \int_{[-1,0)}x\nu_\xi(dx)>0$  and let $\mu_{c}$, $c>1$, be the $c$-factor distribution of $\mu$ as defined in Definition~\ref{factors}, then by Theorem \ref{thm:selfdec} we have $\mu\in R_\xi^+$ if and only if
\begin{align*} 
 G_1:(0,\infty)&\to[0,\infty)\\
 t&\mapsto \gamma_0  \int_{(0,t)} m(s) ds - \int_{\RR_-} \mu_{e^{-x}}((0,t)) \nu_\xi(dx) \nonumber 
\end{align*}
is non-decreasing. \\
By Proposition \ref{prop:GGCfactorinBO} the $c$-factor distributions of $\mu$ are in $\BO$. Further, for $c>1$, they have drift $a_c:=a(c-1)$ and their CM L\'evy densities are given by
$$g_c(t) = \frac{k(c^{-1}t)-k(t)}{t} = t^{-1} \nu_X((c^{-1} t, t]) = t^{-1}\int_{(c^{-1} t, t]}m(s) ds, \quad t>0, $$
(compare the proof of Proposition \ref{prop:GGCfactorinBO}) where the second equality follows from \eqref{eq:relationlevymeasures}. Further, by l'Hospital's rule $g_c(0+)<\infty$, since $k(0+)<\infty$ and $|k'(0+)|<\infty$. Therefore the L\'evy densities $g_c$ are integrable, which implies that the $\mu_c$ are compound Poisson distributed, as it would have followed similarly from \cite[Thm. 6.1]{bondesson81}. Hence $\mu_c=\cL(a_c+ \sum_{i=1}^N Y_i^c)$, where $N\sim$ Poisson$(\lambda_c)$ and where the random variables $Y_i^c$ are i.i.d. with densities $\lambda_c^{-1} g_c(t),$ $t>0$, with $\lambda_c^{-1}:= \int_{(0,\infty)}g_c(t) dt$.    \\
Therefore $\mu_c$ has the density
$$e^{-\lambda_c} \sum_{n=1}^\infty \frac{\lambda_c^n}{n!} (\lambda_c^{-1} g_c(t-a_c))^{\ast n} = e^{-\lambda_c} \sum_{n=1}^\infty \frac{(g_c(t-a_c))^{\ast n}}{n!}, \quad t>a_c,$$
and an atom of mass $e^{-\lambda_c}$ in $a_c$. This yields that $a=0$ is necessary for $\mu$ to be in the range, because otherwise $G_1$ has negative jumps. \\
Now for $a=0$ the term $\int_{\RR_-} \mu_{e^{-x}}((0,t)) \nu_\xi(dx)$ is differentiable and the function $G_1(t)$, $t>0$, as above, is non-decreasing if and only if for all $t>0$
$$\frac{dG_1(t)}{dt} = \gamma_0 m(t) - \int_{\RR_-} \exp(-\lambda_{e^{-x}}) \sum_{n=1}^\infty \frac{(g_{e^{-x}}(t))^{\ast n}}{n!} \nu_\xi(dx)\geq 0.$$
For example, assume that $\mu$ is a Gamma$(k,\theta)$ distribution. Then it has zero drift and its L\'evy density is given by $k t^{-1} e^{-\theta t}$ (cf. \cite[Ex. 8.10]{sato}) such that it fulfills the above assumptions. Further we deduce $m(t)=k \theta e^{-\theta t}$, 
$$g_c(t)=k\cdot \frac{e^{-c^{-1}\theta t}-e^{-\theta t}}{t}, \quad \text{and} \quad \lambda_c=k \log c.$$
Thus
\begin{align*}
\frac{dG_1(t)}{dt} &=  \gamma_0 m(t) - \int_{\RR_-} e^{kx}  \sum_{n=1}^\infty \frac{(g_{e^{-x}}(t))^{\ast n}}{n!} \nu_\xi(dx)\\
&\leq \gamma_0 k \theta e^{-\theta t} - \int_{\RR_-} e^{kx}  g_{e^{-x}}(t) \nu_\xi(dx)\\
&= ke^{-\theta t} \left(\gamma_0 \theta -  \int_{\RR_-} e^{kx} \cdot \frac{e^{\theta t(1-e^x)}-1}{t} \nu_\xi(dx)\right),
\end{align*}
which becomes negative for large $t$, since $\nu_\xi \not\equiv 0$. Therefore in this case we have shown Gamma$(k,\theta) \not\in R_\xi^+$.
\end{example}

Even in the case that $\xi$ has no jumps but a gaussian part, many GGCs can not be in the range as shown in the following.  

\begin{proposition}
 Let $\xi_t=\sigma B_t+at$, $a, \sigma >0$, and let $\mu\in \T$ have the Laplace exponent \eqref{eq:thorinbernstein} with $k(0+)<\infty$ and $k(t)\not\equiv 0$. Then $\mu\notin R_\xi^+$.
\end{proposition}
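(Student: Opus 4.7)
The plan is to invoke Theorem~\ref{thm:selfdec}(ii) and then to show that its necessary condition (monotonicity of $G_2$) fails. In the setting at hand $\xi$ has no jumps, so $\nu_\xi\equiv 0$ (which makes $\nu_\xi(\RR_-)<\infty$ automatic) and $\gamma_0=a>0$; moreover $\nu_X(\RR_+)=k(0+)<\infty$ by hypothesis, so Theorem~\ref{thm:selfdec}(ii) applies. If the drift $a_\mu$ in the GGC representation~\eqref{eq:thorinbernstein} is strictly positive then $\mu$ does not have drift zero and we are immediately done; assume therefore $a_\mu=0$. Writing $k(t)=\int_{(0,\infty)}e^{-xt}\sigma(dx)$ for a finite, non-trivial measure $\sigma$ on $(0,\infty)$ (which exists because $k$ is CM with $k(0+)<\infty$ and $k\not\equiv0$), the L\'evy measure $\nu_X$ has the CM density $g(t)=-k'(t)=\int_{(0,\infty)} xe^{-xt}\sigma(dx)$, and dominated convergence on the integrand $xte^{-xt}\le e^{-1}$ yields $tg(t)\to0$ at both $t=0$ and $t=\infty$, as required by Theorem~\ref{thm:selfdec}(ii).

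Under the assumption $\mu\in R_\xi^+$, Theorem~\ref{thm:selfdec}(ii) (with the $\nu_\xi$-integral in \eqref{eq-conditionlm2} vanishing) forces $G_2'(t)\ge0$ for a.e.\ $t>0$, where
\[
G_2'(t)=\bigl(a+\sigma^2 k(0+)+\tfrac{\sigma^2}{2}\bigr)g(t)+\tfrac{\sigma^2}{2}tg'(t)-\tfrac{\sigma^2}{2}(g\ast g)(t).
\]
I would contradict this for $t$ large. A direct calculation from the representation of $g$ gives
\[
tg'(t)=-\int x^2te^{-xt}\sigma(dx),\qquad (g\ast g)(t)=\int\!\!\int\frac{xy(e^{-xt}-e^{-yt})}{y-x}\sigma(dx)\sigma(dy),
\]
with the integrand understood as $x^2te^{-xt}$ on the diagonal $\{y=x\}$. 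The paradigm is the gamma case: for $\sigma=c\delta_\theta$ one has $g(t)=c\theta e^{-\theta t}$ and $(g\ast g)(t)=c^2\theta^2 te^{-\theta t}$, hence
\[
G_2'(t)=c\theta e^{-\theta t}\Bigl[\bigl(a+\sigma^2 c+\tfrac{\sigma^2}{2}\bigr)-\tfrac{\sigma^2}{2}\theta t(1+c)\Bigr],
\]
which is strictly negative for every $t>\frac{2(a+\sigma^2 c+\sigma^2/2)}{\sigma^2\theta(1+c)}$, giving the contradiction.

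The hard part is to push this to a \emph{general} CM $k$ with $k(0+)<\infty$ and $k\not\equiv0$: one needs that the $t$-factor in $(g\ast g)(t)$ (visible as $x^2te^{-xt}$ on the diagonal when $\sigma$ has atoms) persists in a weaker form and forces $(g\ast g)(t)/g(t)$ to be ultimately large enough that the negative term $-\tfrac{\sigma^2}{2}(g\ast g)(t)$ dominates $C_1 g(t)+\tfrac{\sigma^2}{2}tg'(t)$. My strategy is to pass to Laplace transforms, using
\[
G(u):=\int_0^\infty e^{-ut}g(t)\,dt=\int\frac{x}{u+x}\sigma(dx)=k(0+)-\psi_X(u)
\]
and $\int_0^\infty e^{-ut}(g\ast g)(t)\,dt=G(u)^2$, and to apply a Karamata-type Tauberian theorem to read off the tail behaviours of $g$ and $g\ast g$ from those of $G$ and $G^2$ near $u=0$. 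Non-triviality of $\sigma$ then produces the required sign change of $G_2'$ at large $t$ and completes the contradiction.
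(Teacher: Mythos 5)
Your reduction to the inequality $G_2'(t)\ge 0$ is sound and is in substance the same as the paper's first step: the paper cites the relevant results of \cite{BLMranges} directly rather than going through Theorem~\ref{thm:selfdec}(ii), but the necessary condition it extracts,
\[
\Bigl(a+\sigma^2 k(0+)+\tfrac{\sigma^2}{2}\Bigr)g(t)+\tfrac{\sigma^2}{2}\,t g'(t)-\tfrac{\sigma^2}{2}(g\ast g)(t)\ \ge\ 0\qquad\text{for all }t>0,
\]
is exactly your $G_2'(t)\ge 0$, and the elimination of a positive drift is handled identically. The genuine gap is the concluding contradiction: you carry it out only when the Thorin measure is a point mass (the gamma case), and for general completely monotone $k$ you offer a strategy rather than a proof. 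That strategy is unlikely to succeed as described. A Karamata-type Tauberian theorem converts the behaviour of $G(u)=\int x(u+x)^{-1}\,\sigma(dx)$ as $u\to0$ into asymptotics of the \emph{integrated} quantities $\int_0^t g$ and $\int_0^t (g\ast g)$, and only under a regular-variation hypothesis that nothing in the problem supplies; passing to pointwise statements about $(g\ast g)(t)/g(t)$ needs a monotone density theorem, and in any case the large-$t$ behaviour of $g$ and $g\ast g$ is governed by $\sigma$ near $\inf\supp\sigma$ (the abscissa of convergence), not by the Laplace transform near $u=0$. As written, the proposal therefore proves the proposition only for gamma distributions.

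The paper finishes differently: it discards the non-negative convolution term entirely and argues that already $\bigl(a+\sigma^2 k(0+)+\tfrac{\sigma^2}{2}\bigr)g(t)+\tfrac{\sigma^2}{2}t g'(t)$ becomes negative for large $t$, using the representation $g(t)=\int_{(0,\infty)}e^{-\lambda t}\,d\rho(\lambda)$ (with $d\rho(\lambda)=\lambda\,\sigma(d\lambda)$ in your notation) and a dominated-convergence limit. This is the route you should take to close your gap, since it avoids any tail comparison between $g$ and $g\ast g$. Note, however, that this step is itself delicate: it amounts to showing $-t g'(t)/g(t)>2\bigl(a+\sigma^2k(0+)+\sigma^2/2\bigr)/\sigma^2$ for all large $t$, which is immediate when $\inf\supp\rho>0$ but needs an extra argument when $\rho$ charges every neighbourhood of the origin (the displayed limit in the paper's proof is in fact $0$ by the dominated convergence it invokes, so the sign for large but finite $t$ has to be extracted more carefully).
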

\begin{proof}
 Let $\mu\in \T$ with $k(0+)<\infty$ be given and define the subordinator $X$ via  \eqref{eq-SD-bernsteinableitung} or \eqref{eq-SD-integral}. Then from Proposition \ref{prop:bondessontoGGC} we know that $\cL(X)\in \BO$ with finite Stieltjes measure and as such it has a Laplace exponent of the form
$$\psi_X(u)= b u +  \int_0^\infty (1-e^{-ut})m(t) dt$$
where $m(t)$ is CM and integrable. From \cite[Thm. 7]{BLMranges} we know that if $\mu\in R_\xi^+$, then necessarily $b=0$. Further from \cite[Remark 7(ii)]{BLMranges} it follows that if $\mu\in R_\xi^+$, then
$$\left(a + \sigma^2 \int_0^\infty m(t) dt  + \frac{\sigma^2}{2} \right) m(t) +   \frac{\sigma^2}{2} t m'(t) - \frac{\sigma^2}{2} (m\ast m)(t) \geq 0, \quad \forall t>0.$$
Since $m(t)$ is CM, it holds
$$m(t)=\int_{[0,\infty)} e^{-\lambda t} d\rho(\lambda)$$
for some measure $\rho$ with $\rho(\{0\})=\lim_{t\to\infty}m(t)=0$. Hence
$$m'(t)= - \int_{(0,\infty)} \lambda e^{-\lambda t} d \rho(\lambda), \quad \int_{(0,\infty)} m(t) dt =\int_{(0,\infty)} \lambda^{-1} d\rho(\lambda)<\infty,$$
and 
\begin{align*}
  (m\ast m)(t) &= \int_0^t m(t-s)m(s) ds 
= \int_{(0,\infty)} \int_{(0,\infty)} \frac{e^{-\zeta t}- e^{-\lambda t}}{\lambda-\zeta} d\rho(\zeta) d\rho(\lambda).
    \end{align*}
So for $\mu\in R_\xi^+$ it is necessary that
\begin{align*}
 \left(a + \sigma^2 \int_{(0,\infty)} \lambda^{-1} d\rho(\lambda)  + \frac{\sigma^2}{2} \right) & \int_{(0,\infty)} e^{-\lambda t} d\rho(\lambda) -   \frac{\sigma^2}{2} t \int_{(0,\infty)} \lambda e^{-\lambda t} d \rho(\lambda) \\
& \quad - \frac{\sigma^2}{2} \int_{(0,\infty)} \int_{(0,\infty)} \frac{e^{-\zeta t}- e^{-\lambda t}}{\lambda-\zeta} d\rho(\zeta) d\rho(\lambda) \geq 0, \quad \forall t>0
\end{align*}
or equivalently
\begin{align} \label{eq:Tinrangecondition}
 & \frac{1}{t} \int_{(0,\infty)} \left(a + \sigma^2 \int_{(0,\infty)} u^{-1} d\rho(u)  + \frac{\sigma^2}{2} \right) e^{-\lambda t} d\rho(\lambda) -  \int_{(0,\infty)} \frac{\sigma^2}{2} \lambda e^{-\lambda t}d\rho(\lambda) \\
& \hspace{3cm}\quad \quad \quad \quad \geq     \frac{1}{t} \int_{(0,\infty)}  \int_{(0,\infty)} \frac{\sigma^2}{2}\frac{e^{-\zeta t}- e^{-\lambda t}}{\lambda-\zeta} d\rho(\zeta) d\rho(\lambda) , \quad \forall t>0. \nonumber
\end{align}
The term on the RHS of \eqref{eq:Tinrangecondition} is non-negative, for the left hand side we observe that by dominated convergence
\begin{align*}
\lefteqn{ \lim_{t\to \infty}  \int_{(0,\infty)}  \left( \frac{a + \sigma^2 \int_{(0,\infty)} u^{-1} d\rho(u)  + \frac{\sigma^2}{2}}{t} -  \frac{\sigma^2}{2} \lambda \right) e^{-\lambda t} d\rho(\lambda) } \\
 &= \int_{(0,\infty)} \lim_{t\to \infty}  \left( \frac{a + \sigma^2 \int_{(0,\infty)} u^{-1} d\rho(u)  + \frac{\sigma^2}{2}}{t} -  \frac{\sigma^2}{2} \lambda \right) e^{-\lambda t} d\rho(\lambda)\\
&< 0
\end{align*}
in contradiction to \eqref{eq:Tinrangecondition}. This proves the proposition.
\end{proof}

\section{Proof of Proposition \ref{prop:GGCfactorinBO}}\label{sec:proof}
\setcounter{equation}{0}

For the proof of Proposition \ref{prop:GGCfactorinBO} we need the following two simple lemmata.

\begin{lemma}\label{lem:cm1}
 Let $\lambda>0$ be constant, then 
$$f(x)=\frac{1-e^{-\lambda x}}{x}, \quad x>0,$$
is completely monotone.
\end{lemma}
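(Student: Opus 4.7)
The plan is to represent $f$ as an integral mixture of exponentials, which immediately exhibits it as a Laplace transform of a finite positive measure and hence, by Bernstein's theorem, as completely monotone. Concretely, I would start from the elementary identity
\begin{equation*}
\frac{1-e^{-\lambda x}}{x} = \int_0^\lambda e^{-tx}\, dt, \quad x>0,
\end{equation*}
which follows by direct integration of the right-hand side.

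From here, the conclusion is immediate: the right-hand side is $\int_{(0,\infty)} e^{-tx}\, \rho(dt)$ with $\rho = \lambda\mid_{(0,\lambda)}$ the restriction of Lebesgue measure to $(0,\lambda)$, so $f$ is the Laplace transform of a finite positive measure on $[0,\infty)$. By Bernstein's theorem (see e.g.\ \cite{rene-book}), every such Laplace transform is completely monotone on $(0,\infty)$. Alternatively, one can differentiate under the integral sign to obtain
\begin{equation*}
(-1)^n f^{(n)}(x) = \int_0^\lambda t^n e^{-tx}\, dt \geq 0
\end{equation*}
for every $n\in\NN_0$ and every $x>0$, which is the definition of complete monotonicity and avoids any appeal to a representation theorem.

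There is no real obstacle here; the only thing to watch is that the integral representation holds pointwise for all $x>0$ (not just in a limit sense) and that differentiation under the integral is justified, which is clear because $t^n e^{-tx}$ is uniformly bounded in $t\in(0,\lambda)$ for fixed $x>0$.
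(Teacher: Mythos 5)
Your proof is correct, but it takes a genuinely different route from the paper. The paper proves the claim by an explicit induction on the order of differentiation, establishing the closed form $f^{(n)}(x)=(-1)^{n} n!\, e^{-\lambda x} x^{-(n+1)} \bigl(e^{\lambda x} - \sum_{k=0}^n \frac{(\lambda x)^k}{k!}\bigr)$ and then observing that the bracketed term is a tail of the exponential series and hence positive. You instead write $f(x)=\int_0^\lambda e^{-tx}\,dt$ and conclude either by Bernstein's theorem or by differentiating under the integral sign to get $(-1)^n f^{(n)}(x)=\int_0^\lambda t^n e^{-tx}\,dt\geq 0$. Your argument is shorter and arguably more transparent, and it has the added benefit of exhibiting the representing measure explicitly (Lebesgue measure on $(0,\lambda)$), which fits naturally with how Lemma \ref{lem:cm2} is then used; the paper's computation, on the other hand, is entirely elementary and yields the exact derivative formula, which could be of independent use. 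One cosmetic point: you denote the restriction of Lebesgue measure by the same symbol $\lambda$ as the constant in the statement, which should be renamed to avoid a clash. The justification of differentiation under the integral is indeed routine here, as you note.
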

\begin{proof}
 Obviously $f$ is infinitely often continuously differentiable and it holds $f(x)>0$, $x>0$. Further it can be shown by an elementary induction, that the $n$-th derivative of $f$ is given by
\begin{equation}\label{eq:derivativeforbondesson}
 f^{(n)}(x)=(-1)^{n} n! e^{-\lambda x} x^{-(n+1)} \left(e^{\lambda x} - \sum_{k=0}^n \frac{(\lambda x)^k}{k!}\right).
\end{equation}
It follows from the series representation of the exponential function, that the term in the brackets in \eqref{eq:derivativeforbondesson} is positive. Hence $(-1)^{n}f^{(n)}(x)\geq 0$, $x>0$, for all $n$ as we had to show.
\end{proof}

\begin{lemma} \label{lem:cm2}
 Let $k(x)$, $x>0$, be completely monotone and let $c>1$ be some constant. Then
$$f(x)=\frac{k(x)-k(c x)}{x}$$
is completely monotone.
\end{lemma}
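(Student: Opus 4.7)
The plan is to reduce the lemma to Lemma \ref{lem:cm1} by means of Bernstein's theorem. Since $k$ is completely monotone on $(0,\infty)$, Bernstein's theorem gives a positive measure $\rho$ on $[0,\infty)$ with
$$k(x)=\int_{[0,\infty)} e^{-\lambda x}\, d\rho(\lambda), \quad x>0.$$
Subtracting $k(cx)$ from $k(x)$ and factoring an exponential yields
$$k(x)-k(cx)=\int_{[0,\infty)} e^{-\lambda x}\bigl(1-e^{-(c-1)\lambda x}\bigr)\, d\rho(\lambda),$$
so after dividing by $x$,
$$f(x)=\int_{[0,\infty)} e^{-\lambda x}\cdot\frac{1-e^{-(c-1)\lambda x}}{x}\, d\rho(\lambda).$$

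Next I would verify that the integrand is completely monotone in $x$ for every fixed $\lambda\geq 0$. For $\lambda=0$ the integrand is identically zero. For $\lambda>0$, since $c>1$ we have $(c-1)\lambda>0$, so Lemma \ref{lem:cm1} applied with the parameter $(c-1)\lambda$ shows that $x\mapsto (1-e^{-(c-1)\lambda x})/x$ is completely monotone. The factor $e^{-\lambda x}$ is completely monotone, and the product of two completely monotone functions is completely monotone; hence the whole integrand is completely monotone in $x$.

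Finally, complete monotonicity is preserved under positive mixtures: differentiation under the integral sign is justified because the partial derivatives in $x$ of $e^{-\lambda x}(1-e^{-(c-1)\lambda x})/x$ stay integrable against $\rho$ (this follows from the same computation used to prove Lemma \ref{lem:cm1}, showing that each derivative is bounded by a constant times $e^{-\lambda x/2}$ on any compact $x$-interval bounded away from $0$, uniformly in $\lambda$ on $\rho$-integrable sets). Consequently $(-1)^n f^{(n)}(x)\geq 0$ for all $n\in\NN_0$ and $x>0$, establishing that $f$ is completely monotone.

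The only delicate point is the dominated-convergence/differentiation-under-the-integral step; the rest is a direct application of Bernstein's theorem and Lemma \ref{lem:cm1}. I expect this to go through without trouble since, after shifting a factor $e^{-\lambda x/2}$ to dominate the derivatives, the remaining factor $\frac{1-e^{-(c-1)\lambda x}}{x}e^{-\lambda x/2}$ is bounded in $\lambda$ on any interval $x\in[\veps,\infty)$, ensuring all exchange-of-limit operations are legitimate.
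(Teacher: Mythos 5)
Your proof is correct and follows essentially the same route as the paper's: both apply Bernstein's theorem to reduce to the exponential case $k(x)=e^{-\lambda x}$, invoke Lemma \ref{lem:cm1} together with closure of complete monotonicity under products to handle that case, and conclude by noting that a positive mixture of completely monotone functions is completely monotone. The extra care you take in justifying the interchange of differentiation and integration is a point the paper leaves implicit, but it does not change the argument.
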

\begin{proof}
Assume first that $k(x)=e^{-\lambda x}$ for some $\lambda >0$. Then
$$f(x)=\frac{e^{-\lambda x}-e^{-\lambda x c}}{x} =e^{-\lambda x}\frac{1-e^{-\lambda x (c-1)}}{x}$$
is CM since $e^{-\lambda x}$ and $x^{-1}(1-e^{-\lambda x (c-1)})$ are CM by Lemma \ref{lem:cm1} and since products of CM functions are again CM (cf. \cite[Cor. 1.6]{rene-book}).\\
Now let $k$ be an arbitrary CM function, i.e.
$$k(x)=\int_{[0,\infty)} e^{-\lambda x} \rho(d\lambda).$$
Then 
\begin{align*}
 f(x)&= \frac{k(x)-k(c x)}{x} 
=\int_{[0,\infty)} \frac{e^{-\lambda x}- e^{-\lambda c x}}{x} \rho(d\lambda)=\int_{(0,\infty)} \frac{e^{-\lambda x}- e^{-\lambda c x}}{x} \rho(d\lambda)
\end{align*}
 is an integral mixture of CM functions and hence CM.
\end{proof}

Now we can state the proof of Proposition \ref{prop:GGCfactorinBO}.

\begin{proof}[Proof of Proposition \ref{prop:GGCfactorinBO}]
 Assume $\mu\in\T$, then its Laplace exponent is given by 
\begin{equation*}
 \psi_\mu(u)= au + \int_0^\infty (1-e^{-u t}) \frac{k(t)}{t} d t, \quad u\geq 0,
\end{equation*}
for some $a\geq 0$ and a CM function $k$. Hence the Laplace exponent of its $c$-factor $\mu_c$, $c\in (0,1)$, is by \eqref{eq-SD-bernsteindifferenz}
\begin{align*}
 \psi_{\mu_c}(u)&=\psi_\mu(u)-\psi_\mu(cu) = a(1-c)u + \int_0^\infty (1-e^{-u t}) \frac{k(t)-k(c^{-1}t)}{t} dt
\end{align*}
and $\mu_c$ is in Bondesson's class if and only if 
$$f(t)=\frac{k(t)-k(c^{-1}t)}{t}$$
is CM. This holds by Lemma \ref{lem:cm2}.\\
Analogous calculations show that also $\mu_c$, $c>1$, is in Bondesson's class.\\
For the converse assume $\mu\in \SD^+$ with $\mu_c\in \BO$ for all $c\in(0,1)$, i.e.
 $\psi_{\mu_c}(u)=\psi_\mu(u)-\psi_\mu(cu)$ is a CBF for all $c\in(0,1)$. 
This implies that 
$$\psi_X(u):=u\psi'_\mu(u) = u \lim_{c\to 1} \frac{\psi_\mu(u)-\psi_\mu(u-(1-c)u)}{u(1-c)}= \lim_{c\to 1} \frac{\psi_\mu(u)-\psi_\mu(u-(1-c)u)}{(1-c)}  $$
is the limit of CBFs and hence a CBF (\cite[Cor. 7.6]{rene-book}). Similarly, if $\mu_c\in \BO$ for all $c>1$ one obtains $\psi_X(u)$ as limit of CBFs for $c\searrow 1$.\\
Now let $(X_t)_{t\geq 0}$ be the subordinator with Laplace exponent $\psi_X$, then  by \cite[Thm. 4 (ii)]{BLMranges} (setting $\sigma=0$) this is equivalent to
$\mu=\Phi_\xi(\cL(X_1))$ for $\xi_t=t$. Hence by Proposition \ref{prop:bondessontoGGC} $\mu$ is in $\T$.
\end{proof}

\section*{Acknowledgements}
The author thanks Makoto Maejima for an inspiring discussion which led to the given proof of Proposition \ref{prop:bondessontoGGC}. Alexander Lindner is thanked for  showing constant interest in this work. Further thanks go to the referee for her/his positive and instructive report which helped to improve the paper.

\end{document}